\newtheorem{theorem}{Theorem}[section]
\newtheorem{lemma}[theorem]{Lemma}
\theoremstyle{definition}
\title{\textbf{$n$-exact Character Graphs}}
\author{Mahdi Ebrahimi\footnote{ m.ebrahimi.math@ipm.ir}
 \\
 {\small\em  School of Mathematics, Institute for Research in Fundamental Sciences (IPM)},\\{\small\em P.O. Box: 19395--5746, Tehran, Iran}}
\date{}
\begin{document}

\maketitle

\begin{abstract}
Let $\Gamma$ be a finite simple graph. If for some integer $n\geqslant 4$, $\Gamma$ is a $K_n$-free graph whose complement has an odd cycle of length at least $2n-5$, then we say that $\Gamma$ is an $n$-exact graph. For a finite group $G$, let $\Delta(G)$ denote the character graph built on the set of degrees of the irreducible complex characters of $G$. In this paper, we prove that the order of an $n$-exact character graph is at most $2n-1$. Also
 we determine the structure of all
 finite groups $G$ with extremal $n$-exact character graph $\Delta(G)$.

 \end{abstract}
\noindent {\bf{Keywords:}}  Character graph, Character degree, $n$-exact graph, Odd cycle. \\
\noindent {\bf AMS Subject Classification Number:}  20C15, 05C38, 05C25.

\section{Introduction}
$\noindent$ Let $G$ be a finite group and $R(G)$ be the solvable radical of $G$. Also let ${\rm cd}(G)$ be the set of all character degrees of $G$, that is,
 ${\rm cd}(G)=\{\chi(1)|\;\chi \in {\rm Irr}(G)\} $, where ${\rm Irr}(G)$ is the set of all complex irreducible characters of $G$. The set of prime divisors of character degrees of $G$ is denoted by $\rho(G)$. It is well known that the
 character degree set ${\rm cd}(G)$ may be used to provide information on the structure of the group $G$. For example, a result due to Berkovich \cite{[B]} says that if a prime $p$ divides
 every non-linear character degree of a group $G$, then $G$ is solvable.

A useful way to study the character degree set of a finite group $G$ is to associate a graph to ${\rm cd}(G)$.
One of these graphs is the character graph $\Delta(G)$ of $G$ \cite{[I]}. Its vertex set is $\rho(G)$ and two vertices $p$ and $q$ are joined by an edge if the product $pq$ divides some character degree of $G$. We refer the readers to a survey by Lewis \cite{[M]} for results concerning this graph and related topics.

There is an interesting conjecture on the structure of $\Delta(G)$ which is posed by Akhlaghi and Tong-Viet in \cite{[AT]}:\\

\noindent\textbf{Conjecture.} Let $G$ be a finite group such that for some positive integer $n$, $\Delta(G)$ is $K_n$-free. If $G$ is solvable, then $|\rho(G)|\leqslant 2n-2$, and if $G$ is non-solvable, then $|\rho(G)|\leqslant 2n-1$.\\
In \cite{1}, it was shown that for a solvable group $G$, the complement of $\Delta(G)$ does not have an odd cycle, in other word, it is bipartite. This result guarantees this conjecture for solvable groups. The conjecture still is not proved for non-solvable groups, however the results in \cite{[Ton]}, \cite{[AT]} and \cite{kh} say that it is true when $n\in\{3,4,5\}$.
Now let $\Gamma$ be a finite simple graph. If for some integer $n\geqslant 4$, $\Gamma$ is a $K_n$-free graph whose complement has an odd cycle of length at least $2n-5$, then we say that $\Gamma$ is an $n$-exact graph. For example, $K_4$-free graphs with non-bipartite complement are $4$-exact. In this paper, we wish to show that the conjecture is true for a finite group whose character graph is $n$-exact, for some integer $n\geqslant 4$.\\

\noindent\textbf{Theorem A.}
 \textit{Let $G$ be a finite group, and $n\geqslant 4$ be an integer. If $\Delta(G)$ is an $n$-exact graph, then there exists a normal subgroup $R(G)<M\leqslant G$ so that $G/R(G)$ is an almost simple group with socle $S:=M/R(G)\cong \rm{PSL}_2(q)$, where $q$ is a prime power, and $|\rho(G)|\leqslant 2n-1$. }\\

  The solvable group $G$ is said to be disconnected if $\Delta(G)$ is disconnected. Also $G$ is called of disconnected Type $n$ if $G$ satisfies the hypotheses of Example $2.n$ in \cite{los}. Let $n\geqslant 4$ be an integer and $\Gamma$ be a finite simple graph with vertex set $V$. The cardinality of $V$ is called the order of $\Gamma$. An $n$-exact graph $\Gamma$ with minimum or maximum possible order is called an extremal $n$-exact graph.
   Note that for an integer $n\geqslant 1$,  the set of prime divisors of $n$ is denoted by $\pi(n)$. Now we are ready to state the next result of this paper.\\

\noindent \textbf{Corollary B.}  \textit{Let $G$ be a finite group with extremal $n$-exact character graph $\Delta(G)$, for some integer $n\geqslant 4$. Then one of the following cases occurs:\\
\textbf{a)}  $|\rho(G)|=2n-5$  and for some integer $\alpha\geqslant 2$, $G\cong \rm{PSL}_2(2^\alpha)\times R(G)$, where $|\pi(2^\alpha\pm 1)|=n-3$ and $R(G)$ is abelian. \\
\textbf{b)}  $|\rho(G)|=2n-1$ and for some integer $\alpha\geqslant 2$, $G\cong \rm{PSL}_2(2^\alpha)\times R(G)$, where $|\pi(2^\alpha\pm 1)|=n-3,n-2$ or $n-1$. Also\\
\textbf{i)} If $|\pi(2^\alpha\pm 1)|=n-3$, then for some disconnected groups $A$ and $B$ of disconnected Types $1$ or $4$, $R(G)\cong A\times B$, $|\rho (A)|=|\rho (B)|=2$ and $\rho(G)=\pi (S)\uplus \rho(A)\uplus \rho(B)$.\\
\textbf{ii)} If $|\pi(2^\alpha\pm 1)|=n-2$, then  $R(G)$ is a disconnected group of disconnected Type $1$ or $4$, $|\rho(R(G))|=2$ and $\rho(G)=\pi (S)\uplus \rho(R(G))$.\\
\textbf{iii)} If $|\pi(2^\alpha\pm 1)|=n-1$, then  $R(G)$ is abelian and $\rho(G)=\pi(S)$.
}

\section{Preliminaries}
$\noindent$ In this paper, all groups are assumed to be finite and all
graphs are simple and finite. For a finite group $G$, the set of prime divisors of $|G|$ is denoted by $\pi(G)$.
If $H\leqslant G$ and $\theta \in \rm{Irr}(H)$, we denote by $\rm{Irr}(G|\theta)$ the set of irreducible characters of $G$ lying over $\theta$ and define $\rm{cd}(G|\theta):=\{\chi(1)|\,\chi \in \rm{Irr}(G|\theta)\}$. We frequently use,  Gallagher's Theorem which is corollary 6.17 of \cite{[isa]}.

\begin{lemma}\label{fraction}
Let $ N \lhd G$ and $\varphi \in \rm{Irr}(N)$. Then for every $\chi \in \rm{Irr}(G|\varphi)$, $\chi(1)/\varphi(1)$ divides $[G:N]$.
\end{lemma}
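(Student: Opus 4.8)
The plan is to deduce the statement from Itô's theorem by peeling off two standard layers of Clifford theory, so that $\varphi$ may be taken central and linear. First I would reduce to the case where $\varphi$ is $G$-invariant. Let $T$ denote the inertia group of $\varphi$ in $G$, so that $N\lhd T\leqslant G$. By the Clifford correspondence, induction is a bijection from $\mathrm{Irr}(T\mid\varphi)$ onto $\mathrm{Irr}(G\mid\varphi)$; if $\psi\in\mathrm{Irr}(T\mid\varphi)$ is the correspondent of the given $\chi$, then $\chi=\psi^{G}$ and hence $\chi(1)=[G:T]\,\psi(1)$. Therefore
\[
\frac{\chi(1)}{\varphi(1)}=[G:T]\cdot\frac{\psi(1)}{\varphi(1)},
\]
and since $[G:T]$ divides $[G:N]$, it is enough to show that $\psi(1)/\varphi(1)$ divides $[T:N]$. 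Replacing $(G,\chi)$ by $(T,\psi)$, I may thus assume that $\varphi$ is $G$-invariant.

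With $\varphi$ invariant, I would invoke the theory of character triples. The triple $(G,N,\varphi)$ is isomorphic to a triple $(G^{*},N^{*},\varphi^{*})$ in which $N^{*}$ is central in $G^{*}$ and $[G^{*}:N^{*}]=[G:N]$, and the associated bijection $\mathrm{Irr}(G\mid\varphi)\to\mathrm{Irr}(G^{*}\mid\varphi^{*})$ preserves the degree ratio, so that $\chi(1)/\varphi(1)=\chi^{*}(1)/\varphi^{*}(1)$ for corresponding characters. As $N^{*}$ is central, $\varphi^{*}$ is linear, whence $\chi^{*}(1)/\varphi^{*}(1)=\chi^{*}(1)$, and it remains only to see that $\chi^{*}(1)$ divides $[G^{*}:N^{*}]$. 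But $N^{*}$ is an abelian normal subgroup of $G^{*}$, so Itô's theorem gives $\chi^{*}(1)\mid[G^{*}:N^{*}]=[G:N]$, which completes the argument.

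The single genuinely technical input is the character-triple isomorphism used in the invariant case; the rest is bookkeeping with the Clifford correspondence and a citation of Itô's theorem. I expect the main obstacle to be purely expository, namely recording the precise feature that a character-triple isomorphism preserves the quotient $\chi(1)/\varphi(1)$ rather than the two degrees individually, since it is exactly this invariance of the ratio that makes the reduction to the central case go through. In practice the whole statement is Corollary 11.29 of \cite{[isa]}, so in the write-up I would most likely simply cite it.
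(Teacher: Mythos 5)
Your argument is correct and is exactly the standard proof of this statement, which is Corollary 11.29 of \cite{[isa]}: Clifford correspondence reduces to the $G$-invariant case, a character-triple isomorphism (which, as you note, preserves the ratio $\chi(1)/\varphi(1)$) reduces to central $N^{*}$, and the central case follows from It\^{o}'s theorem. The paper itself gives no proof at all, treating the lemma as a known result from \cite{[isa]}, so your closing suggestion to simply cite that corollary is precisely what the paper does.
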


Let $\Gamma$ be a graph with vertex set $V(\Gamma)$ and edge set
$E(\Gamma)$. The complement of $\Gamma$ and the induced subgraph of $\Gamma$ on $X\subseteq V(\Gamma)$
 are denoted by $\Gamma^c$ and  $\Gamma[X]$, respectively.  If $E(\Gamma)=\emptyset$, $\Gamma$ is called an empty graph.  When $n:=|V(\Gamma)|$, any cycle of
$\Gamma$ of length $n$ is called a Hamilton cycle. We say that $\Gamma$ is
Hamiltonian if it contains a Hamilton cycle.
We use the notations $K_n$ for a complete graph with $n$ vertices and $C_n$ for a cycle of length $n$. If for some integer $n\geqslant 2$, $\Gamma$ does not contain a copy  of $K_n$ as an induced subgraph, then $\Gamma$ is called a $K_n$-free graph. We now state some relevant results on character graphs
needed in the next sections.

\begin{lemma}\label{bipartite}\cite{1}
Let $G$ be a solvable group. Then $\Delta(G)^c$ is bipartite.
\end{lemma}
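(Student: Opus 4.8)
The plan is to prove the equivalent combinatorial statement that $\rho(G)$ can be partitioned into two sets each of which is a clique of $\Delta(G)$, since this is precisely the condition that $\Delta(G)^c$ admits a proper $2$-colouring, i.e. is bipartite. Recall that a graph is bipartite if and only if it has no odd cycle, and that a shortest odd cycle is necessarily chordless (a chord would split off a strictly shorter odd cycle). So it suffices to exclude every induced odd cycle from $\Delta(G)^c$. The first tool is Pálfy's three-primes condition for solvable groups: among any three vertices of $\Delta(G)$ at least two are adjacent. In the complement this says exactly that $\Delta(G)^c$ is triangle-free, so a shortest odd cycle — should one exist — has length at least $5$. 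The task therefore reduces to showing that $\Delta(G)$ never contains, as an induced subgraph, the complement of an odd cycle $C_{2k+1}$ with $k\geq 2$.

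Next I would argue by minimal counterexample: let $G$ be a solvable group of least order for which $\Delta(G)^c$ contains an induced odd cycle on primes $p_1,\dots,p_{2k+1}$ (consecutive primes non-adjacent in $\Delta(G)$, all other pairs adjacent). A first reduction comes from quotients. For $1\neq N\lhd G$ one has ${\rm cd}(G/N)\subseteq {\rm cd}(G)$, so a non-edge of $\Delta(G)$ lying inside $\rho(G/N)$ remains a non-edge of $\Delta(G/N)$; consequently any odd cycle of $\Delta(G)^c$ all of whose vertices lie in $\rho(G/N)$ survives in $\Delta(G/N)^c$. By minimality this forces that, for every minimal normal subgroup $N$, at least one cycle prime is ``created'' by $N$, i.e. lies in $\rho(G)\setminus\rho(G/N)$. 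This ties the combinatorics of the cycle tightly to the bottom of a chief series and is the key structural leverage.

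The case in which $\Delta(G)$ is disconnected I would dispose of directly. By Pálfy such a graph has exactly two connected components, and the corresponding solvable groups are exactly Lewis's finitely many disconnected types (the Examples $2.n$ of \cite{los}). For each type the character-degree structure is explicit and $\rho(G)$ is visibly a union of two cliques, so $\Delta(G)^c$ is bipartite and contains no odd cycle, contradicting the choice of $G$. Thus one may assume $\Delta(G)$ is connected.

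The connected case is the heart of the argument and the step I expect to be the main obstacle. Here I would fix a minimal normal subgroup $N$, necessarily elementary abelian, and analyse $G$ through the action of $G/C_G(N)$ on $N$ together with Clifford theory and Gallagher's theorem (Lemma \ref{fraction}) applied to ${\rm Irr}(G\mid\lambda)$ for the linear characters $\lambda$ of $N$. The primes created by $N$ arise from degrees of characters lying over nontrivial $\lambda$, and the prescribed adjacency pattern — each such prime non-adjacent to its two cycle-neighbours yet adjacent to all remaining cycle primes — must be confronted with the orbit and inertia data of this linear action. The difficulty is exactly this bookkeeping: adjacencies of $\Delta(G)$ pass to quotients, but the non-adjacencies, which forbid specified prime pairs from co-dividing a single degree, have to be controlled through the module structure of the abelian chief factors. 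Showing that the ``complement-of-an-odd-cycle'' pattern cannot be realised simultaneously by these stabilizer degrees is where the real work lies; a contradiction there, combined with the reductions above, completes the proof.
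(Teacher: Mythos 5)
Your proposal is a roadmap, not a proof, and by your own admission the decisive step is missing. The reductions you carry out are sound: P\'alfy's three-prime condition does make $\Delta(G)^c$ triangle-free, so a shortest odd cycle in the complement has length at least $5$ and is chordless; non-edges of $\Delta(G)$ do survive in $\Delta(G/N)$ since ${\rm cd}(G/N)\subseteq{\rm cd}(G)$, so your minimal-counterexample reduction (every nontrivial normal subgroup must ``create'' a cycle prime) is correct; and the disconnected case is even easier than you make it, since P\'alfy's condition alone forces both connected components of a disconnected $\Delta(G)$ to be cliques, with no need for Lewis's classification. But everything after that --- excluding an induced odd cycle of length at least $5$ from $\Delta(G)^c$ when $\Delta(G)$ is connected --- is precisely the theorem, and your sketch of ``confronting the adjacency pattern with orbit and inertia data'' does not engage with why this is hard. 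The obstruction is that non-adjacency of two primes in $\Delta(G)$ is a global condition on all of ${\rm cd}(G)$, and controlling it requires substantially more than Clifford theory and Gallagher applied to one minimal normal subgroup: the argument of Akhlaghi, Casolo, Dolfi, Khedri and Pacifici (the source cited as \cite{1}) runs through deep structural results on solvable groups with prescribed sets of pairwise non-adjacent primes, the action of $G$ on abelian chief factors viewed as modules, and arithmetic of the resulting orbit sizes and stabilizer degrees (including Zsigmondy-type prime divisors). None of that is reconstructed or replaced in your outline.

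It is also worth noting that the paper under review does not prove this lemma at all; it is quoted verbatim from \cite{1}, where its proof occupies essentially the whole of that article. So there is no shortcut in the present paper that your sketch could be measured against: the standard you would have to meet is the full argument of \cite{1}, and the gap between your final paragraph and that argument is the entire mathematical content of the statement. As written, your proposal establishes only that a minimal counterexample would be a solvable group with connected degree graph whose complement contains an induced $C_{2k+1}$, $k\geqslant 2$, each of whose cycle primes is created by every minimal normal subgroup --- a useful starting configuration, but one from which no contradiction has been derived.
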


\begin{lemma}\label{square2}\cite{Sq}
Let $G$ be a solvable group with $\Delta(G)\cong C_4$. Then $G\cong A\times B$, where  $A$ and $B$ are disconnected groups. Also for some distinct primes $p,q,r$ and $s$, $\rho(A)=\{p,q\}$ and $\rho(B)=\{r,s\}$.
\end{lemma}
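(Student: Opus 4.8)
The goal is to recover a full direct-product decomposition of $G$ from the single graph-theoretic hypothesis $\Delta(G)\cong C_4$, so I would begin by fixing notation adapted to the geometry of the $4$-cycle. Write $\rho(G)=\{p,q,r,s\}$ and recall that $C_4$ has exactly two non-adjacent (diagonal) pairs of vertices; relabel the primes so that these two non-edges are $\{p,q\}$ and $\{r,s\}$. Then $\Delta(G)$ is the complete bipartite graph $K_{2,2}$ with parts $\pi_1=\{p,q\}$ and $\pi_2=\{r,s\}$: each prime of $\pi_1$ is joined to each prime of $\pi_2$, while $pq$ and $rs$ are non-edges. Translated into degrees, this says that no $\chi(1)$ is divisible by $pq$ or by $rs$, so every character degree has the shape $p^aq^br^cs^d$ with $\min(a,b)=\min(c,d)=0$; equivalently $\pi(\chi(1))$ is one of $\{p,r\},\{p,s\},\{q,r\},\{q,s\}$ or a subset thereof. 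This is the structural fact I would carry through the whole argument.

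To produce the decomposition I would exploit that, being solvable, $G$ is $\pi$-separable for every set $\pi$, so the theory of $\pi$-special characters (Gajendragadkar) is available. Taking $\pi=\pi_1=\{p,q\}$, each factorable $\chi\in\mathrm{Irr}(G)$ splits uniquely as $\chi=\chi_{\pi}\,\chi_{\pi'}$ with $\chi_{\pi}$ a $\pi$-special character (degree a $\{p,q\}$-number) and $\chi_{\pi'}$ a $\pi'$-special character (degree prime to $p$ and $q$, hence an $\{r,s\}$-number). The plan is to upgrade this character-level factorization to a group-level one: by analysing a chief series and the Fitting subgroup $F(G)$, and using Clifford theory together with Lemma~\ref{fraction} to control how the two non-edge conditions $pq\nmid\chi(1)$ and $rs\nmid\chi(1)$ force the relevant normal sections to centralize one another, I expect to locate a normal subgroup $A\trianglelefteq G$ with $\rho(A)=\{p,q\}$ admitting a normal complement $B$ with $\rho(B)=\{r,s\}$, so that $G=A\times B$.

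Once $G\cong A\times B$ with $\rho(A)=\{p,q\}$ and $\rho(B)=\{r,s\}$ is in hand, the remaining assertions are immediate from the direct-product formula $\mathrm{cd}(A\times B)=\{ab:a\in\mathrm{cd}(A),\,b\in\mathrm{cd}(B)\}$: adjacency of two primes lying in a single factor is inherited unchanged from that factor's own character graph. Hence $p,q$ are non-adjacent in $\Delta(A)$ and $r,s$ are non-adjacent in $\Delta(B)$, and since each of $\Delta(A),\Delta(B)$ has only two vertices, both are edgeless and therefore disconnected. The four primes are distinct because $|\rho(G)|=4$ forces $\rho(A)\cap\rho(B)=\varnothing$, giving exactly the stated conclusion.

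The genuinely hard step is the middle one: passing from the multiplicative ``rectangle'' structure of $\mathrm{cd}(G)$ to an honest internal direct product of subgroups. The degree condition alone does not split $G$ into a $\{p,q\}$-Hall factor and an $\{r,s\}$-Hall factor — and it cannot, since degrees such as $p^a r^c$ mix primes from both parts — so $A$ and $B$ are not Hall subgroups and must be built by a finer normal-subgroup analysis. I anticipate the cleanest route is to feed the $\pi$-special reduction into the classification of solvable groups with disconnected character graph (the ``disconnected types''): each half of the $K_{2,2}$ is forced to be one of those two-prime disconnected configurations, after which one checks that the two halves act on disjoint sections of $G$ and commute. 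Verifying this commuting and splitting is where the bulk of the technical work lies, and it is precisely the point at which Lemma~\ref{bipartite} and Pálfy's condition would be used to rule out the alternative non-split configurations.
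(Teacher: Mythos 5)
This lemma is not proved in the paper at all: it is quoted from Lewis and Meng \cite{Sq} (``Square character degree graphs yield direct products''), whose entire article is devoted to establishing exactly this statement, so the benchmark is that external proof. Measured against it, your proposal has a genuine gap at precisely the point you flag yourself. Your first and third paragraphs are fine: reading $C_4$ as $K_{2,2}$ with parts $\{p,q\}$ and $\{r,s\}$ given by the two diagonals is correct, and once $G\cong A\times B$ with $\rho(A)=\{p,q\}$, $\rho(B)=\{r,s\}$ is in hand, the formula $\mathrm{cd}(A\times B)=\{ab:a\in\mathrm{cd}(A),\,b\in\mathrm{cd}(B)\}$ does immediately force $\Delta(A)$ and $\Delta(B)$ to be edgeless on two vertices, hence both groups disconnected. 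But the middle step --- actually producing the normal subgroups $A$ and $B$ --- is never argued; it is only announced (``I expect to locate\dots'', ``I anticipate\dots''). No candidate for $A$ is defined, no argument is given that the putative factors centralize each other or that a complement exists, and the appeal to $\pi$-special characters is weaker than you suggest: in a solvable group only the \emph{factorable} irreducible characters decompose as $\chi_\pi\chi_{\pi'}$ (your own word ``factorable'' concedes this), so the reduction does not control all of $\mathrm{Irr}(G)$, and nothing you write closes that loop.

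The missing step is the theorem, not a technicality. The Lewis--Meng proof proceeds through a detailed analysis of the Fitting subgroup and normal structure, combined with Lewis's classification of solvable groups whose degree graph has two connected components \cite{los} (the ``disconnected types'' used elsewhere in this paper), and the verification that the two halves split off as an internal direct product occupies the bulk of their paper; it cannot be compressed into ``a finer normal-subgroup analysis'' plus P\'alfy's three-prime condition, which for $K_{2,2}$ is vacuously satisfied and rules nothing out. As written, your text is a sensible research plan that correctly localizes the difficulty, but it does not constitute a proof of the lemma.
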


\begin{lemma}\label{square}\cite{Sq}
Let $G$ be a solvable group. If $\Delta(G)$ has at least $4$ vertices, then either $\Delta(G)$ contains a triangle or $\Delta(G)\cong C_4$.
\end{lemma}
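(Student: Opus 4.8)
The plan is to argue by supposing $\Delta(G)$ contains no triangle and then showing $\Delta(G)\cong C_4$; this clearly suffices. First I would invoke Lemma~\ref{bipartite}: since $G$ is solvable, $\Delta(G)^c$ is bipartite, so $\rho(G)=V_1\cup V_2$ splits into two parts, each independent in $\Delta(G)^c$ and hence a clique of $\Delta(G)$. Triangle-freeness forces every clique to have at most two vertices, so $|V_1|,|V_2|\leqslant 2$ and therefore $|\rho(G)|\leqslant 4$. Together with the hypothesis $|\rho(G)|\geqslant 4$ this pins down $|\rho(G)|=4$ and $|V_1|=|V_2|=2$. Writing $V_1=\{a,b\}$ and $V_2=\{c,d\}$, both $ab$ and $cd$ are edges of $\Delta(G)$, so $\Delta(G)$ contains two disjoint edges.

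Next I would enumerate the remaining freedom, which lies only in the four cross pairs $ac,ad,bc,bd$. A triangle would arise the moment two cross edges share a vertex (they would close a triangle with $ab$ or with $cd$), so triangle-freeness forces the present cross edges to form a matching. The matching has size $0$, $1$, or $2$, giving precisely three candidate graphs up to isomorphism: $2K_2$ (no cross edge), the path $P_4$ (one cross edge), and $C_4$ (a cross perfect matching). It then remains to eliminate the first two possibilities.

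Ruling out $2K_2$ is the easy step: this graph is disconnected with both connected components equal to $K_2$, i.e.\ two components each on two vertices. By P\'alfy's inequality for disconnected character degree graphs of solvable groups, if $\Delta(G)$ is disconnected with component vertex-sets of sizes $m\leqslant \ell$ then $\ell\geqslant 2^{m}-1$; here $m=2$ would force $\ell\geqslant 3$ and hence $|\rho(G)|\geqslant 5$, contradicting $|\rho(G)|=4$. Thus $\Delta(G)\not\cong 2K_2$.

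The genuinely delicate case, and the one I expect to be the main obstacle, is $P_4$, say $p_1-p_2-p_3-p_4$ with the single cross edge $p_2p_3$ and pendant primes $p_1,p_4$. The trouble is that $P_4$ is connected and its complement is bipartite, so neither Lemma~\ref{bipartite} nor P\'alfy's disconnectedness bound excludes it; the exclusion must come from finer group theory. The route I would take is to treat the pendant primes $p_1,p_4$ like the isolated primes of a disconnected constituent and to examine, via Gallagher's Theorem (Lemma~\ref{fraction}), the degrees of irreducible characters lying over a constituent in a suitable normal subgroup whose index carries $\{p_2,p_3\}$; the aim is to produce a character degree divisible by one of the non-edge products $p_1p_3$, $p_2p_4$, or $p_1p_4$. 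Any such product forces an extra edge: $p_1p_3$ or $p_2p_4$ would close a triangle (contradicting triangle-freeness), while $p_1p_4$ would complete the matching and yield $C_4$ rather than $P_4$. In either outcome $\Delta(G)$ cannot be exactly the path $P_4$, and invoking the structural description of solvable groups with $|\rho(G)|=4$ (in the spirit of Lewis's analysis of degree graphs with few vertices) should make this forcing rigorous. Once both $2K_2$ and $P_4$ are excluded, the only surviving triangle-free graph is $C_4$, completing the proof.
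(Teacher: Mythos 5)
Your reduction is sound as far as it goes: invoking Lemma~\ref{bipartite} to split $\rho(G)$ into two cliques, deducing $|\rho(G)|=4$ with $|V_1|=|V_2|=2$ under triangle-freeness, observing that the cross edges must form a matching, and so narrowing the candidates to $2K_2$, $P_4$ and $C_4$ is all correct. The exclusion of $2K_2$ via P\'alfy's inequality for disconnected degree graphs of solvable groups (components of sizes $m\leqslant \ell$ satisfy $\ell\geqslant 2^m-1$, so two components of size $2$ are impossible) is also a legitimate appeal to a standard result, even though this paper does not state it. Note that the paper itself offers no proof to compare against: Lemma~\ref{square} is quoted from Lewis and Meng \cite{Sq}, so the burden of your attempt is to actually supply the missing argument.

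That is where the genuine gap lies, and you have identified it yourself: the $P_4$ case is not proved, only planned. Saying that a Gallagher/Clifford analysis over ``a suitable normal subgroup whose index carries $\{p_2,p_3\}$'' should produce a degree divisible by $p_1p_3$, $p_2p_4$ or $p_1p_4$ is not an argument --- no such normal subgroup is exhibited (a Hall $\{p_2,p_3\}$-subgroup of a solvable group need not be normal, and nothing in Lemma~\ref{fraction} or Gallagher's Theorem forces the desired divisibility), and no mechanism is given for why one of those three products must divide a character degree. Since the complement of $P_4$ is again $P_4$, which is bipartite, Lemma~\ref{bipartite} is genuinely powerless here, and excluding $P_4$ (equivalently, ruling out a four-vertex graph of diameter three for solvable groups) is precisely the hard content of the Lewis--Meng lemma; the known proofs proceed through a detailed structural analysis of the Fitting subgroup, Hall subgroups and P\'alfy's three-prime condition applied to sections of $G$, not through a single application of Gallagher's Theorem. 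As it stands, your proposal is a correct reduction of the lemma to its one difficult case, together with an unsubstantiated hope for that case, so it does not constitute a proof.
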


The structure of the character graphs of $\rm{PSL}_2(q)$ and $^2B_2(q^2)$ are determined as follows:

\begin{lemma}\label{chpsl}\cite{[white]}
Let $G\cong \rm{PSL}_2(q)$, where $q\geqslant 4$ is a power of a prime $p$.\\
\textbf{a)}
 If $q$ is even, then $\Delta(G)$ has three connected components, $\{2\}$, $\pi(q-1)$ and $\pi(q+1)$, and each component is a complete graph.\\
\textbf{b)}
 If $q>5$ is odd, then $\Delta(G)$ has two connected components, $\{p\}$ and $\pi((q-1)(q+1))$.\\
i)
 The connected component $\pi((q-1)(q+1))$ is a complete graph if and only if $q-1$ or $q+1$ is a power of $2$.\\
ii)
  If neither of $q-1$ or $q+1$  is a power of $2$, then $\pi((q-1)(q+1))$ can be partitioned as $\{2\}\cup M \cup P$, where $M=\pi (q-1)-\{2\}$ and $P=\pi(q+1)-\{2\}$ are both non-empty sets. The subgraph of $\Delta(G)$ corresponding to each of the subsets $M$, $P$ is complete, all primes are adjacent to $2$, and no prime in $M$ is adjacent to any prime in $P$.
 \end{lemma}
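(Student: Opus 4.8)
The plan is to read the required adjacencies directly off the explicit character degree set of $\mathrm{PSL}_2(q)$, which is classical, and then argue by elementary divisibility. First I would record the degree data: for $q$ even one has $\mathrm{cd}(G)=\{1,q-1,q,q+1\}$, while for $q>5$ odd the degree set consists of $1,q,q-1,q+1$ together with two characters of degree $(q+1)/2$ when $q\equiv 1\pmod 4$, or two of degree $(q-1)/2$ when $q\equiv 3\pmod 4$. Since $q=p^f$ and $\gcd(q,q\pm 1)=1$, the prime $p$ divides only the Steinberg degree $q=p^{f}$; as $q$ has no prime divisor other than $p$, no product $pr$ with $r\neq p$ can divide a degree, so $p$ is an isolated vertex in both the even and the odd case. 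This already isolates the component $\{2\}$ (resp. $\{p\}$).

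For part a) I would argue that, $q$ being even, the surviving degrees $q-1$ and $q+1$ are odd, and $\rho(G)=\{2\}\cup\pi(q-1)\cup\pi(q+1)$. Any two distinct primes $r,s\in\pi(q-1)$ satisfy $rs\mid q-1$, so $\pi(q-1)$ induces a complete graph, and likewise $\pi(q+1)$. Because $q-1$ and $q+1$ are coprime odd integers, the two prime sets are disjoint, and a prime of one block cannot be multiplied with a prime of the other into any of $q-1,\,q,\,q+1$; hence there is no edge joining the blocks, giving three complete components.

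For part b) the work concentrates on $\pi((q-1)(q+1))$, where both $q-1$ and $q+1$ are even with $\gcd(q-1,q+1)=2$ and exactly one of them is divisible by $4$. The governing number-theoretic fact I would isolate is that for every odd prime $r\mid q-1$ one has $2r\mid q-1$: writing $q-1=2^{b}c$ with $c$ odd and $b\geqslant 1$ gives $r\mid c$, whence $2r\mid 2c\mid q-1$; the same holds symmetrically for $q+1$. Thus $2$ is adjacent to every odd prime of $\pi((q-1)(q+1))$, while distinct odd primes inside $\pi(q-1)$ (resp. $\pi(q+1)$) are adjacent through the degree $q-1$ (resp. $q+1$). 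For the non-adjacency, if $r\mid q-1$ and $s\mid q+1$ are distinct odd primes then $r\nmid q+1$ and $s\nmid q-1$ (since $\gcd(q-1,q+1)=2$), and as the only degrees they could divide are $q-1,\,q+1$ and their halves, no product $rs$ divides a degree; so there is no edge between $M=\pi(q-1)\setminus\{2\}$ and $P=\pi(q+1)\setminus\{2\}$. Combining these observations, if neither $q-1$ nor $q+1$ is a power of $2$ then $M$ and $P$ are both nonempty and we obtain exactly structure (ii); whereas if, say, $q+1$ is a power of $2$, then $P=\emptyset$, every vertex lies in $\{2\}\cup M$, all pairwise adjacent, and the component is complete, giving (i). In all cases $2$ connects everything, so $\pi((q-1)(q+1))$ is a single component.

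The main obstacle is bookkeeping rather than conceptual: one must verify that the half-degrees $(q\pm 1)/2$ never introduce an unexpected edge, which they cannot, since their prime content is contained in that of $q-1$ or $q+1$; and one must treat the two residues of $q$ modulo $4$ uniformly, which the divisibility step $2r\mid q-1$ accomplishes in a single stroke. The only genuine external input is the classical character degree set of $\mathrm{PSL}_2(q)$, after which every adjacency and non-adjacency follows mechanically from $\gcd(q-1,q+1)\le 2$.
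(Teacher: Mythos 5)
Your argument is correct: the degree sets you quote for $\mathrm{PSL}_2(q)$ (including the two half-degree characters $(q+\epsilon)/2$ with $\epsilon$ depending on $q \bmod 4$, and the failure of $q+1$ to be a degree only at $q=5$, which the hypothesis $q>5$ excludes) are accurate, and the divisibility bookkeeping with $\gcd(q-1,q+1)\leqslant 2$ gives exactly the stated component structure. The paper itself gives no proof, citing White \cite{[white]}, and your derivation is essentially the same route as that source --- reading all adjacencies and non-adjacencies directly off the classical character degree set --- so there is nothing to add.
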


  \begin{lemma}\label{chsz}\cite{[white1]}
Let $G\cong ^2B_2(q^2)$, where $q^2=2^{2m+1}$ and $m\geqslant 1$.
The set of primes $\rho(G)$ can be partitioned as $\rho(G)=\{2\}\cup \pi (q^2-1)\cup \pi (q^4+1)$.
The subgraph of $\Delta(G)$ on $\rho(G)-\{2\}$ is complete and $2$ is adjacent in $\Delta(G)$ to precisely the primes in $\pi(q^2-1)$.
  \end{lemma}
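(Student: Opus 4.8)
The statement is a direct consequence of Suzuki's determination of the full character table of ${}^2B_2(q^2)$, so the plan is to read off the required partition and adjacencies from the explicit list of irreducible degrees rather than to compute anything intrinsic to the graph. Writing $r:=2^{m+1}$ (so that $r^2=2q^2$), the nontrivial irreducible character degrees of $G\cong {}^2B_2(q^2)$ are
\[
q^4,\qquad q^4+1,\qquad 2^m(q^2-1),\qquad (q^2-1)(q^2+r+1),\qquad (q^2-1)(q^2-r+1),
\]
each occurring with a multiplicity that is irrelevant here. First I would record the two arithmetic facts that drive everything: the factorization $(q^2+r+1)(q^2-r+1)=(q^2+1)^2-r^2=q^4+1$, which gives $\pi(q^2+r+1)\cup\pi(q^2-r+1)=\pi(q^4+1)$; and the coprimality $\gcd(q^2-1,q^4+1)=\gcd(q^2-1,2)=1$, valid since $q^2-1$ is odd. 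Combined with $2\nmid q^4+1$, this shows that $\{2\}$, $\pi(q^2-1)$ and $\pi(q^4+1)$ are pairwise disjoint and exhaust $\rho(G)$, establishing the claimed partition.

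Next I would verify that the induced subgraph on $\rho(G)\setminus\{2\}$ is complete by exhibiting, for each pair of primes, a single degree that their product divides. Two primes of $\pi(q^2-1)$ both divide $q^2-1$, hence their product divides $(q^2-1)(q^2+r+1)$; two primes of $\pi(q^4+1)$ both divide the degree $q^4+1$; and for $s\in\pi(q^2-1)$ and $t\in\pi(q^4+1)$ one has $t\mid q^2+r+1$ or $t\mid q^2-r+1$, so $st$ divides $(q^2-1)(q^2+r+1)$ or $(q^2-1)(q^2-r+1)$ accordingly. Thus every pair outside $2$ is joined by an edge.

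Finally I would settle the edges at the vertex $2$. The only even degrees in the list are $q^4$, which is a $2$-power and therefore contributes no edge, and $2^m(q^2-1)$, whose odd part is exactly $q^2-1$; since any prime under consideration is odd and $m\geqslant 1$, the product $2p$ divides a character degree precisely when $p\mid q^2-1$. Hence $2$ is adjacent to every prime of $\pi(q^2-1)$ and to no prime of $\pi(q^4+1)$, as asserted. The only genuinely hard input is the character table itself; granting it, the remaining argument is the elementary number-theoretic bookkeeping above, and I expect the one point that needs real care to be the disjointness of the three prime sets, which rests entirely on the parity and coprimality observations in the first step.
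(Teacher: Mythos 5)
Your proposal is correct: the degree list you quote for ${}^2B_2(q^2)$ (in the paper's normalization $q^2=2^{2m+1}$, $r=2^{m+1}$) is Suzuki's complete list, the factorization $(q^2+r+1)(q^2-r+1)=q^4+1$ and the coprimality $\gcd(q^2-1,q^4+1)=1$ are both right, and the edge-by-edge bookkeeping at the vertex $2$ is exactly as you state. The paper itself gives no proof, citing White's determination of the degree graphs of exceptional groups of Lie type, and White's argument is precisely this reading-off from the character table, so your route coincides with the intended one.
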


When $\Delta(G)^c$ is not a bipartite graph, then there exists a useful restriction on the structure of $G$ as follows:

\begin{lemma}\label{cycle} \cite{AC}
Let $G$ be a finite group and $\pi$ be a subset of the vertex set of $\Delta(G)$ such that $|\pi|> 1$ is an odd number. Then $\pi$ is the set of vertices of a cycle in $\Delta(G)^c$ if and only if $O^{\pi^\prime}(G)=S\times A$, where $A$ is abelian, $S\cong \rm{SL}_2(u^\alpha)$ or $S\cong \rm{PSL}_2(u^\alpha)$ for a prime $u\in \pi$ and a positive integer $\alpha$, and the primes in $\pi - \{u\}$ are alternately odd divisors of $u^\alpha+1$ and  $u^\alpha-1$.
\end{lemma}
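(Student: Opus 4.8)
The plan is to treat the two implications separately, since they are of quite different character: the backward direction is an explicit computation on the graph of $\mathrm{PSL}_2(u^\alpha)$, while the forward direction is a structural classification resting on the classification of finite simple groups.

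For the backward direction, suppose $N := O^{\pi'}(G) = S \times A$ with $A$ abelian and $S \cong \mathrm{SL}_2(u^\alpha)$ or $\mathrm{PSL}_2(u^\alpha)$. First I would show that the induced subgraph $\Delta(G)[\pi]$ coincides with $\Delta(S)[\pi]$. Indeed, because $A$ is abelian we have $\mathrm{cd}(N) = \mathrm{cd}(S)$, so $\pi \subseteq \rho(S) = \rho(N)$ and $\Delta(N)[\pi] = \Delta(S)[\pi]$; and since $G/N$ is a $\pi'$-group, Lemma~\ref{fraction} shows that for $\chi \in \mathrm{Irr}(G)$ lying over $\psi \in \mathrm{Irr}(N)$ the ratio $\chi(1)/\psi(1)$ divides the $\pi'$-number $[G:N]$, so two primes of $\pi$ divide a common degree of $G$ exactly when they divide a common degree of $N$. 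As $\mathrm{SL}_2(u^\alpha)$ and $\mathrm{PSL}_2(u^\alpha)$ induce the same adjacencies on the odd primes in question, Lemma~\ref{chpsl} then applies: writing $\pi - \{u\} = \{p_1, \ldots, p_{2k}\}$ with the $p_i$ alternately dividing $u^\alpha - 1$ and $u^\alpha + 1$, two primes on the same side are adjacent in $\Delta(S)$ hence non-adjacent in $\Delta(G)^c$, two primes on opposite sides are non-adjacent in $\Delta(S)$ hence adjacent in $\Delta(G)^c$, and the defining characteristic $u$ (lying in its own component of $\Delta(S)$) is adjacent in $\Delta(G)^c$ to every other prime of $\pi$. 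Reading off these adjacencies exhibits $u, p_1, p_2, \ldots, p_{2k}, u$ as a cycle in $\Delta(G)^c$ through exactly the vertices of $\pi$, which is odd since $|\pi| = 2k+1$.

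For the forward direction, assume $\pi$ is the vertex set of a cycle in $\Delta(G)^c$ with $|\pi|$ odd. An odd cycle is not bipartite, so Lemma~\ref{bipartite} forces $G$ to be non-solvable, and I would next pass to $N := O^{\pi'}(G)$, checking that the odd cycle already lives in $\Delta(N)^c$ (by the index argument above) and that $N$ is non-solvable. The heart of the argument is to identify the non-abelian part of $N$: using the classification of finite simple groups together with the known shape of the character degree graphs of the simple groups, one shows that the only way an induced subgraph of $\Delta(N)^c$ can contain an odd cycle of the alternating type is for $N$ to have a single non-abelian composition factor isomorphic to $\mathrm{PSL}_2(u^\alpha)$. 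Here Lemma~\ref{chsz} is exactly what excludes the Suzuki groups: the complement of $\Delta({}^2B_2(q^2))$ is a star centred at $2$ together with isolated vertices, hence contains no cycle at all; analogous graph-shape computations rule out the remaining families, leaving $\mathrm{(P)SL}_2(u^\alpha)$ as the only survivor. Once $S$ is pinned down, Lemma~\ref{chpsl} forces the alternating pattern on $\pi - \{u\}$, since consecutive vertices of the cycle must be non-adjacent in $\Delta(S)$, which by part (ii) of Lemma~\ref{chpsl} means they divide $u^\alpha - 1$ and $u^\alpha + 1$ alternately; a final step peels off the abelian direct factor $A$ to conclude $N = S \times A$.

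The main obstacle is the classification step in the forward direction. Verifying that no simple group other than $\mathrm{PSL}_2(u^\alpha)$ can support an odd cycle in the complement of its character graph --- and, more delicately, that such cycles can be made to have arbitrary odd length only for this family --- requires a case analysis over all families of simple groups via the classification, combined with inductive control on the length of the cycle. The reduction that lets the whole problem be phrased on $N = O^{\pi'}(G)$ rather than on $G$ itself, together with splitting off the abelian factor $A$ as a genuine direct summand, is the other technically delicate point, since it must track precisely how character degrees and their prime divisors behave across the relevant normal sections.
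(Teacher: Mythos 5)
You should note at the outset that the paper contains no proof of Lemma \ref{cycle}: it is imported verbatim from \cite{AC}, so there is no in-paper argument to compare against, and your proposal has to be judged on its own merits against what the cited source actually establishes. Your backward direction is essentially complete and correct: since $[G:O^{\pi'}(G)]$ is a $\pi'$-number, Lemma \ref{fraction} (together with the standard fact that $\psi(1)$ divides $\chi(1)$ for $\chi$ lying over $\psi\in\mathrm{Irr}(N)$ with $N\lhd G$) gives $\Delta(G)[\pi]=\Delta(N)[\pi]=\Delta(S)[\pi]$, and reading the alternating Hamiltonian cycle of $\Delta(G)^c[\pi]$ off Lemma \ref{chpsl} is then routine; your observation that $\mathrm{SL}_2$ and $\mathrm{PSL}_2$ induce the same adjacencies on the relevant odd primes is also right.

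The forward direction, however, is a genuine gap rather than a proof. The sentence ``one shows that the only way an induced subgraph of $\Delta(N)^c$ can contain an odd cycle of the alternating type is for $N$ to have a single non-abelian composition factor isomorphic to $\mathrm{PSL}_2(u^\alpha)$'' is a restatement of the theorem, not a step toward it. More importantly, the method you propose --- scanning ``the known shape of the character degree graphs of the simple groups,'' as in your Suzuki computation via Lemma \ref{chsz} --- cannot suffice in principle: $\Delta(N)$ is not controlled by the graphs $\Delta(T)$ of the composition factors $T$ of $N$. Degrees of $N$ arise through Clifford theory over chief factors (extendibility obstructions, Gallagher multiplication, degrees of almost simple overgroups), so $\Delta(N)$ is in general far denser than any $\Delta(T)$, and the absence of cycles in $\Delta(T)^c$ for a simple group $T$ says nothing direct about $\Delta(N)^c$ when $T$ is merely a composition factor. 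The actual proof in \cite{AC} is a substantial CFSG-based induction over the normal structure of an arbitrary finite group, using detailed character-degree information for almost simple groups; and even after locating the factor $\mathrm{PSL}_2(u^\alpha)$, establishing the exact equality $O^{\pi'}(G)=S\times A$ --- that $S$ is quasisimple of type $\mathrm{SL}_2/\mathrm{PSL}_2$, and that the complement $A$ is abelian and splits off as a direct factor --- is a further nontrivial stage that your sketch leaves entirely open. You flag this honestly as the main obstacle, but as written the forward implication is asserted, not proved.
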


  \begin{lemma}\label{lw}\cite{[non]}
 Let $p$ be a prime, $f\geqslant 2$ be an integer, $q=p^f\geqslant 5$ and $S\cong \rm{PSL}_2(q)$. If $q\neq9$ and $S\leqslant G\leqslant \rm{Aut}(S)$, then $G$ has irreducible characters of degrees $(q+1)[G:G\cap \rm{PGL}_2(q)]$ and $(q-1)[G:G\cap \rm{PGL}_2(q)]$.
 \end{lemma}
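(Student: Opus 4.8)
The plan is to combine the explicit generic character table of $\mathrm{PSL}_2(q)$ with Clifford theory applied to the cyclic quotient of field automorphisms. Write $B := G \cap \mathrm{PGL}_2(q)$, so that $B \lhd G$ with $B \in \{S, \mathrm{PGL}_2(q)\}$ (the latter occurring only when $p$ is odd), and set $d := [G:B]$, which divides $f$; here $G/B$ is cyclic, generated by the image of the Frobenius field automorphism $\phi : x \mapsto x^p$. Since the target degrees are $(q\pm 1)d$, the natural mechanism is induction: if I can produce $\psi \in \mathrm{Irr}(B)$ of degree $q+1$ (respectively $q-1$) whose inertia group $I_G(\psi)$ equals $B$, then $\psi^G \in \mathrm{Irr}(G)$ has degree $d(q+1)$ (respectively $d(q-1)$), exactly as claimed, with no splitting since $I_G(\psi)=B$ forces $\psi^G$ to be irreducible by the standard Clifford correspondence.

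First I would recall the relevant characters of $B$. The degree-$(q+1)$ irreducibles form the (reducible) principal series and are parametrized by the nontrivial characters $\alpha$ of the split torus $T^{+} \cong \mathbb{Z}/(q-1)$ with $\alpha \neq \alpha^{-1}$, where $\alpha$ and $\alpha^{-1}$ yield the same character; likewise the degree-$(q-1)$ cuspidal characters are parametrized by characters $\beta$ of the nonsplit torus $T^{-} \cong \mathbb{Z}/(q+1)$ with $\beta \neq \beta^{-1}$, again up to inversion. When $B = S$ rather than $\mathrm{PGL}_2(q)$ one restricts to those parameters surviving the quotient by the center, which changes the bookkeeping only by bounded factors. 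The key structural input is that the generator $\phi$ of $G/B$ acts on these parameters by $\alpha \mapsto \alpha^{p}$ and $\beta \mapsto \beta^{p}$ through its action on $\mathbb{F}_q^{\times}$ and on $T^{-}$, so that $\psi_\alpha$ is fixed by $\phi^{j}$ precisely when $\alpha^{p^{j}} \in \{\alpha, \alpha^{-1}\}$.

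Thus $I_G(\psi_\alpha) = B$ is equivalent to $\alpha^{p^{j}} \notin \{\alpha, \alpha^{-1}\}$ for every $j$ with $0 < j < d$, i.e. to $\alpha$ having a regular orbit under the induced $\mathbb{Z}/d$-action modulo inversion. The heart of the argument is therefore a counting estimate: the number of ``bad'' parameters $\alpha$ (those stabilized by some nontrivial $\phi^{j}$) is of size $O(\sqrt{q})$ and is strictly smaller than the total number $\approx (q-1)/2$ of available parameters once $q$ is large, so an elementary divisor count produces a ``good'' $\alpha$, and symmetrically a good $\beta$ on $T^{-}$. Inducing the corresponding $\psi_\alpha$ and $\psi_\beta$ to $G$ then yields the two desired irreducible characters.

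I expect the main obstacle to be exactly this counting step at the small end of the range, where the field-automorphism action can fail to admit a regular orbit; this is precisely why the hypotheses $q \geq 5$ and $q \neq 9$ are imposed. I would dispose of the genuinely small cases, and of $q = 9$ (where $\mathrm{PSL}_2(9) \cong A_6$ has exceptional behavior and extra outer automorphisms), by directly inspecting the known character tables of $\mathrm{PSL}_2(q)$, $\mathrm{PGL}_2(q)$ and their field-automorphism extensions, verifying the existence of the two characters by hand. A secondary technical point is confirming that $\psi_\alpha^{G}$ attains the stated degree rather than extending at a smaller degree, but this is automatic once $I_G(\psi_\alpha) = B$, so no extension or cohomological obstruction enters.
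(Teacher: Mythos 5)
This lemma is not proved in the paper at all: it is quoted verbatim, with citation, from Lewis and White \cite{[non]}, so there is no internal argument to compare against; the relevant comparison is with the proof in the cited source, and your reconstruction does follow essentially that standard route (Clifford theory for the cyclic extension of $B:=G\cap\mathrm{PGL}_2(q)$ by field automorphisms, a regular-orbit count on torus parameters modulo inversion, and table checks for small $q$). Your outline is correct, with three points needing repair before it is a proof. First, $G/B$ is cyclic but its generator is the image of $\phi^{f/d}$, possibly composed with a diagonal automorphism when $q$ is odd and $B=S$; you must then verify that the diagonal part fixes every character of $S$ of degree $q\pm1$ (it does, since these characters extend to $\mathrm{PGL}_2(q)$), otherwise your inertia criterion $\alpha^{p^{j}}\in\{\alpha,\alpha^{-1}\}$ is incomplete. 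Second, the bad-parameter bound is $O(f\sqrt{q})$ rather than $O(\sqrt{q})$ --- for each of the $d-1$ nontrivial powers $\phi^{jf/d}$ the fixed sets have size $\gcd(p^{jf/d}\mp1,\,q\mp1)\leqslant p^{f/2}+1$ --- which still loses to the roughly $q/4$ available parameters except for finitely many small $q$, so your strategy of disposing of the small end by inspection is sound. Third, your reading of the hypotheses is confirmed by the actual exceptions: $q\geqslant5$ together with $f\geqslant2$ excludes $q=4$, where the claim fails for $G=S_5\cong\mathrm{P\Gamma L}_2(4)$ (no irreducible of degree $10$), and $q=9$ fails for $G=S_6$ (no irreducible of degree $20$), which is exactly why $q\neq9$ is assumed. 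With those adjustments your proposal is a faithful reconstruction of the Lewis--White argument; it buys nothing beyond the citation, but it is the right proof.
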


Now we present some facts on the structure of character graphs with non-bipartite complement.

 \begin{lemma}\label{ote}\cite{ME}
Let $G$ be a finite group, $R(G)< M\leqslant G$, $S:=M/R(G)$ be isomorphic to $ \rm{PSL}_2(q)$, where for some prime $p$ and positive integer $f\geqslant 1$, $q=p^f$, $|\pi(S)|\geqslant 4$ and $S\leqslant G/R(G)\leqslant \rm{Aut}(S)$. Also let $\theta\in \rm{Irr}(R(G))$. If $\Delta(G)^c$ is not  bipartite,  then $\theta$ is $M$-invariant.
\end{lemma}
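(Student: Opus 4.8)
The plan is to extract the strong structural consequence of non-bipartiteness from Lemma \ref{cycle}, and then to show that the non-abelian factor it produces centralizes $R(G)$; the $M$-invariance of every $\theta$ is then immediate.

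First I would record the structure. Since $\Delta(G)^c$ is not bipartite, Lemma \ref{bipartite} forces $G$ to be non-solvable, and applying Lemma \ref{cycle} to the vertex set $\pi$ of an odd cycle of $\Delta(G)^c$ yields $N:=O^{\pi'}(G)=L\times A$, where $A$ is abelian and $L\cong\mathrm{SL}_2(u^\alpha)$ or $L\cong\mathrm{PSL}_2(u^\alpha)$ for some prime $u\in\pi$ and integer $\alpha\geqslant1$. Because $|\pi(S)|\geqslant4$ we have $u^\alpha\geqslant4$, so $L$ is perfect; hence $N'=L'\times A'=L$, and since $N=O^{\pi'}(G)\trianglelefteq G$ its characteristic subgroup $N'=L$ is normal in $G$. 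Next I would identify $L$ with the socle: its image $\overline L:=LR(G)/R(G)$ is a non-trivial normal subgroup of the almost simple group $G/R(G)$, so it contains $S$; as $\overline L\cong\mathrm{PSL}_2(u^\alpha)$ is simple and $S\trianglelefteq\overline L$, we get $\overline L=S$, and therefore $LR(G)=M$ (two subgroups containing $R(G)$ with equal image in $G/R(G)$ coincide).

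With $M=LR(G)$ in hand, the $M$-invariance of $\theta$ reduces to its $L$-invariance: since $R(G)$ fixes every character in $\mathrm{Irr}(R(G))$, a direct computation gives $\theta^{lz}=\theta^{l}$ for all $l\in L$, $z\in R(G)$, so it suffices to show that $L$ centralizes $R(G)$. This is the heart of the argument. As both $L$ and $R(G)$ are normal in $G$, we have $[L,R(G)]\leqslant L\cap R(G)$; and because $L\cap R(G)$ is a solvable normal subgroup of the (quasi)simple group $L$, it is contained in $Z(L)$. Thus $[L,R(G)]\leqslant Z(L)$, which yields $[L,R(G),L]\leqslant[Z(L),L]=1$ and likewise $[R(G),L,L]=1$. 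The three-subgroups lemma then gives $[[L,L],R(G)]=1$, and since $L$ is perfect this is exactly $[L,R(G)]=1$.

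Finally, $[L,R(G)]=1$ means that $L$ acts trivially by conjugation on $R(G)$, hence fixes every $\theta\in\mathrm{Irr}(R(G))$; combined with the reduction above, $\theta$ is $M$-invariant, as required. The step I expect to need the most care is the structural identification at the start — extracting $L\times A$ from Lemma \ref{cycle}, checking that $L=N'$ is perfect and normal in $G$, and pinning down $\overline L=S$ so that $M=LR(G)$ — after which the three-subgroups lemma computation, which is the genuine engine of the proof, is short and clean, and notably avoids any case analysis of the proper subgroups of $\mathrm{PSL}_2(q)$.
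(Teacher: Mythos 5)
Your argument is correct in substance, and there is a wrinkle in the comparison: this paper does not prove Lemma \ref{ote} at all, it simply cites \cite{ME}, so the only in-paper ingredient you could have matched is Lemma \ref{cycle} — which is indeed the engine you use. Your proof is a valid, self-contained derivation from Lemma \ref{cycle} plus the three-subgroups lemma, and it actually establishes the stronger conclusion $[L,R(G)]=1$ (so $M$ is a central product of $L$ and $R(G)$, and \emph{every} $\theta\in\mathrm{Irr}(R(G))$ is $M$-invariant), notably without ever using the hypothesis $|\pi(S)|\geqslant 4$ and without any case analysis of subgroups of $\mathrm{PSL}_2(q)$.

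Two justifications, however, need patching — neither is fatal. First, your reason for $u^\alpha\geqslant 4$ (hence perfectness of $L$) is circular as written: at that stage you have not yet related $u^\alpha$ to $q$, and the identification $S\cong\mathrm{PSL}_2(u^\alpha)$ is itself derived later \emph{using} perfectness of $L$ (to get $L=N'\trianglelefteq G$). The correct justification is internal to Lemma \ref{cycle}: the odd cycle has at least three vertices, so $\pi-\{u\}$ is a non-empty set of even size whose primes are alternately odd divisors of $u^\alpha+1$ and $u^\alpha-1$; hence both $u^\alpha-1$ and $u^\alpha+1$ admit odd prime divisors, forcing $u^\alpha\geqslant 4$. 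Second, you assert $\overline{L}\cong\mathrm{PSL}_2(u^\alpha)$ without argument, but a priori $\overline{L}\cong L/(L\cap R(G))$ could be $\mathrm{SL}_2(u^\alpha)$ with $u$ odd, namely when $L\cap R(G)=1<Z(L)$ (that $L\cap R(G)\leqslant Z(L)$ follows since it is a solvable normal subgroup of the quasisimple group $L$). This case is excluded in one line: $Z(\overline{L})$ is characteristic in $\overline{L}\trianglelefteq G/R(G)$, hence normal in the almost simple group $G/R(G)$; if non-trivial it would contain the socle $S$, which is absurd for a group of order $2$. So $\overline{L}$ is centerless, $L\cap R(G)=Z(L)$, $\overline{L}\cong\mathrm{PSL}_2(u^\alpha)$ is simple, and $\overline{L}=S$, $M=LR(G)$ as you claim. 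With these two repairs, the three-subgroups computation ($[L,R(G)]\leqslant L\cap R(G)\leqslant Z(L)$, then $[L,L,R(G)]=1$ and $[L,R(G)]=[L',R(G)]=1$ by perfectness) goes through cleanly and yields the $M$-invariance of every $\theta$.
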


\begin{lemma}\label{direct product}\cite{ME}
Assume that $f\geqslant 2$ is an integer, $q=2^f$,  $S\cong \rm{PSL}_2(q)$ and $G$ is a finite group such that $G/R(G)=S$. If $\Delta(G)[\pi(S)]=\Delta(S)$, then $G\cong S\times R(G)$.
\end{lemma}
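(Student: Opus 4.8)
The plan is to reduce the statement to showing that the unique component of $G$ is isomorphic to $S$ itself, after which the central-product structure of $G$ collapses to a direct product. Since $G/R(G)\cong S$ is nonabelian simple, it is its own layer, and components of $G/R(G)$ are the images of components of $G$; hence $G$ has a component $L$ with $LR(G)/R(G)=S$. Two distinct such components would commute and both surject onto $S$, forcing $[S,S]=1$, which is false; so $L$ is unique, hence characteristic and normal in $G$, with $L/Z(L)\cong S$, $L\cap R(G)=Z(L)$ and $LR(G)=G$. Because $L\lhd G$ and $R(G)\lhd G$ we get $[L,R(G)]\leq L\cap R(G)=Z(L)$, and the three subgroups lemma applied to $L,L,R(G)$ together with $L=L'$ then yields $[L,R(G)]=1$. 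Thus $G=L\ast_{Z(L)}R(G)$ is a central product, and $G\cong S\times R(G)$ precisely when $Z(L)=1$.

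First I would dispose of the generic case. The group $L$ is a quasisimple cover of $S=\mathrm{PSL}_2(2^f)$, so $Z(L)$ is a quotient of the Schur multiplier $M(S)$. For $f\geq 3$ this multiplier is trivial, whence $Z(L)=1$, $L\cong S$, $L\cap R(G)=1$, and $G=L\times R(G)\cong S\times R(G)$; notably the graph hypothesis is not needed in this range. The one remaining value is $q=4$, where $S\cong A_5$ and $M(S)\cong\mathbb{Z}/2\mathbb{Z}$, so a priori $L\cong \mathrm{SL}_2(5)$ with $Z(L)\cong\mathbb{Z}/2\mathbb{Z}$. Ruling this out is the heart of the argument and the only place where $\Delta(G)[\pi(S)]=\Delta(S)$ is used.

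For $q=4$ we have $\pi(S)=\{2,3,5\}$, and by Lemma \ref{chpsl}(a) the three components of $\Delta(S)$ are the singletons $\{2\}$, $\pi(q-1)=\{3\}$ and $\pi(q+1)=\{5\}$; in particular $\Delta(S)$ is an empty graph, so the hypothesis forces $\Delta(G)[\{2,3,5\}]$ to have no edges. Suppose $Z(L)\cong\mathbb{Z}/2\mathbb{Z}$, so $L\cong \mathrm{SL}_2(5)$ and $Z:=L\cap R(G)=Z(L)$ is a central involution of $R(G)$. Now $\mathrm{SL}_2(5)$ has a faithful irreducible character $\lambda$ of degree $6$, i.e. one with $Z\not\leq\ker\lambda$. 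Since $Z\cong\mathbb{Z}/2\mathbb{Z}$ is central and nontrivial in $R(G)$, some $\mu\in\mathrm{Irr}(R(G))$ also satisfies $Z\not\leq\ker\mu$. By the character theory of the central product $G=L\ast_Z R(G)$, the pair $(\lambda,\mu)$ lies over the same linear character of $Z$ and so yields $\chi\in\mathrm{Irr}(G)$ with $\chi(1)=\lambda(1)\mu(1)=6\,\mu(1)$. As $6=2\cdot 3$, the degree $\chi(1)$ is divisible by the distinct primes $2$ and $3$ of $\pi(S)$, so $\{2,3\}$ is an edge of $\Delta(G)[\pi(S)]$, contradicting the emptiness of $\Delta(S)$. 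Hence $Z(L)=1$ even when $q=4$, and in all cases $G\cong S\times R(G)$.

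The main obstacle is precisely the exceptional Schur multiplier at $q=4$: for every larger $q=2^f$ the direct-product conclusion is forced by the generalized Fitting subgroup alone, and the graph hypothesis does real work only in distinguishing $A_5$ from $\mathrm{SL}_2(5)$. The delicate points to check are the existence of a constituent $\mu$ of $R(G)$ that is nontrivial on $Z$ and the clean multiplication of degrees across the central product; alternatively, when $|\pi(S)|\geq 4$ the invariance input of Lemma \ref{ote} can be used to set up the same reduction character-theoretically.
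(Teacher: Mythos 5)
Your argument breaks at its very first step: the claim that, because $G/R(G)\cong S$ is simple, $G$ must have a quasisimple component $L$ with $LR(G)/R(G)=S$. Components of $G$ do map onto components of quotients, but components of $G/R(G)$ need \emph{not} lift to components of $G$, and here they genuinely can fail to. Take $G=V\rtimes \mathrm{SL}_2(2^f)$ with $V$ the natural $2$-dimensional module over $\mathbb{F}_{2^f}$. Then $R(G)=V$ and $G/R(G)\cong \mathrm{PSL}_2(2^f)=S$, but $F(G)=V$ is self-centralizing, so $F^*(G)=V$ and $E(G)=1$: $G$ has no component at all, and of course $G\not\cong S\times R(G)$. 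This same example refutes your assertion that for $f\geqslant 3$ ``the graph hypothesis is not needed'' and the direct-product conclusion ``is forced by the generalized Fitting subgroup alone'': with trivial Schur multiplier and no graph condition, the conclusion is simply false. What saves the lemma in this example is precisely the hypothesis $\Delta(G)[\pi(S)]=\Delta(S)$: any $\chi\in\mathrm{Irr}(G)$ lying over a nontrivial $\lambda\in\mathrm{Irr}(V)$ has degree divisible by the orbit size $q^2-1$, producing an edge between $\pi(q-1)$ and $\pi(q+1)$, which by Lemma \ref{chpsl}(a) lie in distinct connected components of $\Delta(S)$. So the hypothesis does real work for \emph{every} $f$, and any correct proof must use it throughout, not only at $q=4$.

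Consequently the central-product decomposition $G=L\ast_{Z(L)}R(G)$ on which everything downstream rests is unavailable, and the proof cannot be patched locally; the strategy has to be replaced by a Clifford-theoretic analysis of the characters $\theta\in\mathrm{Irr}(R(G))$ — showing via the graph condition that every such $\theta$ is $G$-invariant, linear, and extendible, so that degrees and the group structure are forced to be those of a direct product (this is the kind of route the cited source [ME] takes; note the paper itself only quotes the lemma from [ME], so there is no in-text proof to compare line by line). Two smaller remarks: your appeal to Lemma \ref{ote} as an alternative does not cover the hard cases, since that lemma requires $|\pi(S)|\geqslant 4$, which fails for $q=4$ and $q=8$; on the other hand, the pieces of your argument that are self-contained — the uniqueness of a hypothetical component, the three subgroups lemma computation, and the $q=4$ analysis producing a degree divisible by $6$ from the faithful degree-$6$ character of $\mathrm{SL}_2(5)$ paired with $\mu\in\mathrm{Irr}(R(G))$ nontrivial on $Z$ — are all correct as far as they go, but they are conditional on an existence statement that is false.
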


\begin{lemma}\label{Hamilton}\cite{Me}
 Let $G$ be a finite group. The complement of the character graph $\Delta(G)$ is a non-bipartite Hamiltonian graph if and only if $G\cong \rm{SL}_2(2^f)\times A$, where $f\geqslant 2$ is an integer, $||\pi(2^f+1)|-|\pi(2^f-1)||\leqslant 1$ and $A$ is an abelian group.
 \end{lemma}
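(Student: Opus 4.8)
The plan is to prove the two implications separately. The reverse implication reduces to a computation with the known character graph of $\mathrm{PSL}_2(2^f)$, while the forward implication carries essentially all of the content and will rely on the structural lemmas about non-bipartite complements.

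For \emph{sufficiency}, assume $G\cong \mathrm{SL}_2(2^f)\times A$ with $A$ abelian and $f\geq 2$, and recall that $\mathrm{SL}_2(2^f)=\mathrm{PSL}_2(2^f)$ in even characteristic. Since $A$ is abelian, $\mathrm{cd}(G)=\mathrm{cd}(\mathrm{PSL}_2(2^f))$, so $\Delta(G)=\Delta(\mathrm{PSL}_2(2^f))$. By Lemma~\ref{chpsl}(a) this graph is the disjoint union of three cliques on $\{2\}$, $\pi(2^f-1)$ and $\pi(2^f+1)$, whence $\Delta(G)^c$ is the complete tripartite graph $K_{1,b,c}$ with $b=|\pi(2^f-1)|\geq1$ and $c=|\pi(2^f+1)|\geq1$. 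Having three non-empty parts it contains a triangle, hence is non-bipartite. I would then invoke the classical criterion that a complete multipartite graph on $N\geq 3$ vertices is Hamiltonian iff its largest part has size at most $N/2$; with parts $1,b,c$ and $N=1+b+c$ this reads $\max(b,c)\leq 1+\min(b,c)$, i.e.\ exactly $|b-c|\leq 1$, which is the stated balance hypothesis.

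For \emph{necessity}, assume $\Delta(G)^c$ is non-bipartite and Hamiltonian. By Lemma~\ref{bipartite}, $G$ is non-solvable. Choose an odd cycle in $\Delta(G)^c$ (available by non-bipartiteness) and let $\pi$ be its vertex set; Lemma~\ref{cycle} gives $O^{\pi'}(G)=S\times A_0$ with $A_0$ abelian, $S\cong \mathrm{SL}_2(u^\alpha)$ or $\mathrm{PSL}_2(u^\alpha)$, and the primes of $\pi\setminus\{u\}$ alternating between odd divisors of $u^\alpha\pm1$. The first key step is to force $u=2$. If $u$ were odd then $2\notin\pi$, and by Lemma~\ref{chpsl}(b) the defining prime is isolated in $\Delta$ while the prime $2$ is adjacent in $\Delta$ to every non-defining prime; this makes $2$ a vertex of complement-degree at most $1$, which cannot lie on a Hamilton cycle, a contradiction. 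Hence $u=2$ and $S\cong\mathrm{SL}_2(2^f)=\mathrm{PSL}_2(2^f)$. It then remains to upgrade ``$S$ lies in $G$'' to a direct product: using Lemma~\ref{lw} I would rule out a proper field-automorphism extension (a nontrivial index $[G/R(G):S]$ would attach new primes to $\pi(2^f\pm1)$, destroying the three-clique shape), so $G/R(G)=S$; Lemma~\ref{ote} then makes every $\theta\in\mathrm{Irr}(R(G))$ $M$-invariant, forcing $\Delta(G)[\pi(S)]=\Delta(S)$, so Lemma~\ref{direct product} yields $G\cong S\times R(G)$. A final argument shows $R(G)$ is abelian, since any prime it contributed would add a vertex incompatible with the complete-tripartite/Hamiltonian shape, and then $\Delta(G)^c=K_{1,|\pi(2^f-1)|,|\pi(2^f+1)|}$, so the multipartite Hamiltonicity criterion again forces $||\pi(2^f+1)|-|\pi(2^f-1)||\leq1$.

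The main obstacle is the necessity direction, and within it two points. First, ruling out odd defining characteristic and proper almost-simple extensions rigorously: I must show that in every such case $\Delta(G)^c$ acquires a vertex of degree at most $1$ (or becomes bipartite), and carrying this out for the \emph{full} group $G$ rather than for $\mathrm{PSL}_2(u^\alpha)$ in isolation requires controlling the extra primes coming from $R(G)$ via Lemma~\ref{ote}. Second, the reduction to a genuine direct product with abelian $A$ must be justified so that $\Delta(G)^c$ is \emph{exactly} the complete tripartite graph whose balance one reads off. Finally, some small cases (for instance $|\pi(S)|=3$, where the hypothesis $|\pi(S)|\geq 4$ of Lemma~\ref{ote} fails) will need separate direct verification.
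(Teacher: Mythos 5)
This lemma is quoted by the paper from \cite{Me} without an internal proof, so there is no in-paper argument to compare against; judged on its own terms, your \emph{sufficiency} direction is correct and complete: for abelian $A$ one has $\mathrm{cd}(\mathrm{SL}_2(2^f)\times A)=\mathrm{cd}(\mathrm{SL}_2(2^f))$, Lemma~\ref{chpsl}(a) makes $\Delta(G)$ a disjoint union of cliques on $\{2\}$, $\pi(2^f-1)$, $\pi(2^f+1)$, so $\Delta(G)^c$ is complete tripartite (hence contains a triangle), and the standard criterion that a complete multipartite graph is Hamiltonian if and only if no part exceeds half the vertex count translates exactly into $\bigl||\pi(2^f+1)|-|\pi(2^f-1)|\bigr|\leqslant 1$.

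The \emph{necessity} direction, which carries all the substance, has genuine gaps at precisely the two places you flag, and flagging them does not fill them. First, your elimination of odd $u$: you claim $2$ has complement-degree at most $1$ because, by Lemma~\ref{chpsl}(b), $2$ is adjacent to every non-defining prime --- but that is a statement about $\Delta(S)$, not $\Delta(G)$. You must also show $2$ is adjacent in $\Delta(G)$ to every prime of $\rho(G)\setminus\pi(S)$ contributed by $R(G)$ or by $[G:M]$; otherwise $2$ could have two complement-neighbours ($u$ together with a prime of $\pi_F$) and lie on a Hamilton cycle after all. The tool you invoke, Lemma~\ref{ote}, requires $|\pi(S)|\geqslant 4$ and delivers only $M$-invariance of $\theta\in\mathrm{Irr}(R(G))$; to get the needed adjacencies via degrees $\theta(1)(u^\alpha\pm 1)$ one must \emph{extend} $\theta$ and apply Gallagher, and in odd characteristic the Schur multiplier of $\mathrm{PSL}_2(u^\alpha)$ is non-trivial (of even order), so this step breaks --- which is exactly why Lemma~\ref{cycle} allows $\mathrm{SL}_2(u^\alpha)$ alongside $\mathrm{PSL}_2(u^\alpha)$. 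Second, the upgrade to $G\cong S\times R(G)$ with $R(G)$ abelian is asserted rather than proved: a nontrivial index or a nonabelian $R(G)$ yields primes adjacent (by Lemma~\ref{lw} and Gallagher) to all of $\pi(u^{2\alpha}-1)$, so in $\Delta(G)^c$ their neighbours lie only in $\{2\}\cup\pi_F$; a priori such a configuration --- say two $\pi_F$-primes non-adjacent to each other and to $2$ in $\Delta(G)$ --- still permits a Hamilton cycle, and ruling it out requires a part-size/independent-set count in the complement together with Lemma~\ref{bipartite} applied to the solvable radical, none of which your sketch supplies. Until these steps, and the small cases with $|\pi(S)|=3$ that fall outside Lemma~\ref{ote}, are carried out, the forward implication remains unproven.
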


We end this section with the following result.

 \begin{lemma}\label{type} \cite{ME}
  Let $G$ be a disconnected group. If $|\rho(G)|=2$ and $2 \notin \rho(G)$, then $G$ is of disconnected Type $1$ or $4$.
  \end{lemma}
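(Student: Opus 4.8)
The plan is to reduce the hypotheses to a concrete description of $\Delta(G)$ and of ${\rm cd}(G)$, and then to run through the classification of solvable groups with disconnected character graph, discarding every type except $1$ and $4$. Since $G$ is a disconnected group, by the convention fixed in this paper it is a \emph{solvable} group whose character graph is disconnected. But $\Delta(G)$ has only the two vertices $p$ and $q$ coming from $\rho(G)=\{p,q\}$, so ``disconnected'' here simply means that $p$ and $q$ are non-adjacent; thus $\Delta(G)$ is the empty graph on two vertices. Because $2\notin\rho(G)$, both $p$ and $q$ are odd. Translating this back to degrees, every member of ${\rm cd}(G)$ is $1$, a power of $p$, or a power of $q$, and both a nontrivial power of $p$ and a nontrivial power of $q$ actually occur; in particular no character degree of $G$ is even and no character degree is divisible by two distinct primes.

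Next I would invoke the structure theorem underlying the definition of the disconnected Types, namely that every disconnected (solvable) group satisfies the hypotheses of exactly one of the Examples $2.1$–$2.6$ of \cite{los}, i.e.\ is of disconnected Type $m$ for some $m\in\{1,\dots,6\}$. The goal is to show that the degree pattern ${\rm cd}(G)\subseteq\{1\}\cup\{p^a\}\cup\{q^b\}$ with $p,q$ odd singles out precisely Types $1$ and $4$. I would therefore examine the four remaining Examples ($2.2,2.3,2.5,2.6$) one at a time and, from the explicit construction of the groups in each, exhibit a character degree that violates the pattern of the first step. The two obstructions to watch for are: a connected component of $\Delta(G)$ forced to contain at least two primes (which would give $|\rho(G)|>2$), and a character degree forced to be even (which would give $2\in\rho(G)$). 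In each of Examples $2.2,2.3,2.5,2.6$ the defining data produces one of these two phenomena, so each contradicts the reduction above and is excluded. Types $1$ and $4$ (Examples $2.1$ and $2.4$) are, by contrast, built on Frobenius and two-Frobenius type constructions in which the two degree-components can be arbitrary single odd primes, so the pattern is realizable and no contradiction arises.

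The main obstacle is exactly this per-type verification: one must read off from the explicit groups in Examples $2.2,2.3,2.5,2.6$ of \cite{los} the precise mechanism that forces either an even character degree or a degree divisible by two distinct primes lying in a common component. Once those four cases are eliminated, the only surviving possibilities are disconnected Types $1$ and $4$, which is the assertion of the lemma. A useful consistency check along the way is that the complement of $\Delta(G)$ is here the single edge $K_2$, which is bipartite in accordance with Lemma \ref{bipartite}, so nothing in the reduction conflicts with the solvability of $G$.
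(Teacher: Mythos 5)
The paper offers no internal proof of this lemma: it is quoted verbatim from \cite{ME}, and the argument there is exactly the strategy you outline --- invoke the main theorem of \cite{los}, which says every solvable group with disconnected $\Delta(G)$ satisfies the hypotheses of one of Examples $2.1$--$2.6$, reduce the hypotheses to ${\rm cd}(G)\subseteq\{1\}\cup\{p^a\}\cup\{q^b\}$ with $p,q$ odd, and eliminate Types $2,3,5,6$. So your framing, including the observation that ``disconnected on two vertices'' means $\Delta(G)$ is the empty graph $K_2^c$ and hence every nontrivial degree is a power of a single odd prime, is the right reduction and matches the intended route.

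The genuine gap is that you never execute the step that carries all of the mathematical content: the elimination of Examples $2.2$, $2.3$, $2.5$ and $2.6$ is asserted (``in each \ldots the defining data produces one of these two phenomena'') rather than proved, and you yourself flag it as ``the main obstacle.'' This is not routine bookkeeping that can be waved through: if even one of those four families admitted a member with $\rho(G)$ equal to two odd primes, the lemma would be false, so the proof must actually extract from the explicit constructions in \cite{los} why $2\in\rho(G)$ (or $|\rho(G)|>2$) is forced in each case --- in Types $2$ and $3$ the construction involves a $2$-power field-automorphism action contributing even character degrees, and Types $5$ and $6$ are built over Type-$2$/quaternion-style data with the same effect. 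Without citing or reproducing those degree sets, your argument is an outline whose decisive verifications are missing. A minor additional remark: your closing claim that Types $1$ and $4$ are \emph{realizable} with two odd primes is not needed --- the lemma only asserts that $G$ falls into Type $1$ or $4$, not that both types occur --- so that paragraph could be dropped without loss.
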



\section{The proof of Main results}
$\noindent$ In this section, we wish to prove our main results. When $G$ is a finite group with $4$-exact character graph, then using \cite{[AT]}, \cite{ME} and Lemma \ref{Hamilton} we have nothing to prove. Hence in the sequel, we assume that $G$ is a finite group such that for some integer $n\geqslant 5$, $\Delta(G)$ is $n$-exact.

\begin{lemma}\label{free}
Suppose $G$ is a finite group and $\Delta(G)$ is an $n$-exact graph, for some integer $n\geqslant 5$. Then \\
\textbf{a)}
There exists a normal subgroup $R(G)<M\leqslant G$ so that $G/R(G)$ is an almost simple group with socle $S:=M/R(G)\cong \rm{PSL}_2(u^\alpha)$, where $u$ is a prime, $\alpha$ is a positive integer and $u^\alpha > 5$.\\
\textbf{b)}
For some $\pi_0\subseteq \pi(S)$ and integer $m\geqslant n-3$, $\Delta(G)[\pi_0]$ precisely contains three connected components $\Delta(G)[\{u\}]$ and $\Delta(G)[\pi(u^\alpha \pm 1)\cap \pi_0]\cong K_{m}$.\\
 \textbf{c)}
 If $\pi$ is the set of vertices of an odd cycle in $\Delta(G)^c$, then  $u\in\pi\subseteq \pi (S)$.\\
 Now let $\pi_R:=\rho(R(G))-\pi(G/R(G))$, $\pi_M:=\pi([G:M])-\rho(M)$ and $\pi_F=\pi_R \cup \pi_M$.\\
  \textbf{d) }$\Delta(G)[\pi_M]$ is a complete graph with at most two vertices and every $p\in \pi_F$ is adjacent to all vertices in $\pi(u^{2\alpha}-1)$.\\
\textbf{e) } If $u=2$ and $\pi([G:M])\subseteq \pi_F$, then for every $\theta\in\rm{Irr}(R(G))$, $\Delta(G)[\pi(\theta(1)[G:M](u^\alpha\pm 1))]$ are complete.
\end{lemma}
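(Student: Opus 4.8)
The plan is to read the almost simple structure off a single odd cycle and then use the $K_n$-free hypothesis to control every remaining prime. Since $\Delta(G)^c$ contains an odd cycle of length $\ell\geqslant 2n-5\geqslant 5$, it is non-bipartite, so Lemma \ref{bipartite} forces $G$ to be non-solvable. Let $\pi$ be the vertex set of such a cycle; then $|\pi|=\ell>1$ is odd, and Lemma \ref{cycle} gives $O^{\pi'}(G)=S\times A$ with $A$ abelian and $S\cong\mathrm{SL}_2(u^\alpha)$ or $\mathrm{PSL}_2(u^\alpha)$ for a prime $u\in\pi$. These two groups have the same character graph, so I may take $S\cong\mathrm{PSL}_2(u^\alpha)$; as $S$ is then the unique non-abelian composition factor of $G$, the quotient $G/R(G)$ is almost simple with socle $S=M/R(G)$, where $M$ is the preimage of the socle. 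This is part (a). The bound $u^\alpha>5$ follows because the $\ell-1\geqslant 4$ vertices of $\pi$ other than $u$ are, by Lemma \ref{cycle}, odd primes alternating between divisors of $u^\alpha+1$ and $u^\alpha-1$, which no $\mathrm{PSL}_2(u^\alpha)$ with $u^\alpha\leqslant 5$ can provide.

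For part (c) I would rerun this on an arbitrary odd cycle: Lemma \ref{cycle} again yields a section $\mathrm{(P)SL}_2((u')^\beta)$ whose non-abelian composition factor must be the unique factor $\mathrm{PSL}_2(u^\alpha)$ of $G$; matching defining characteristics, and using $u^\alpha>5$ to discard the small exceptional isomorphisms, gives $u'=u$, so $u\in\pi$, while every prime of $\pi$ divides $|S|$ and hence $\pi\subseteq\pi(S)$. Part (b) is the combinatorial shadow of (a) and (c). The alternation in Lemma \ref{cycle} partitions the cycle vertices as $\{u\}\uplus A_1\uplus A_2$ with $A_1\subseteq\pi(u^\alpha+1)$, $A_2\subseteq\pi(u^\alpha-1)$ and $|A_1|,|A_2|\geqslant(\ell-1)/2\geqslant n-3$. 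Taking $\pi_0$ to be $\pi(S)$ when $u=2$ and $\pi(S)-\{2\}$ otherwise, Lemma \ref{chpsl} shows that in $\Delta(S)$ the set $\pi_0$ splits into the isolated vertex $u$ and two complete graphs $\pi(u^\alpha+1)\cap\pi_0$ and $\pi(u^\alpha-1)\cap\pi_0$, each of size at least $n-3$. Completeness of each part transfers to $\Delta(G)[\pi_0]$ for free, since enlarging $S$ to $G$ only adds edges; what genuinely needs proof is that no edge of $\Delta(G)$ joins $u$ to $\pi_0-\{u\}$ or joins the two parts, and I defer this to the mechanism used for (d) and (e).

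The core of the argument is parts (d) and (e), whose engine is Clifford theory anchored by Lemma \ref{ote}: every $\theta\in\mathrm{Irr}(R(G))$ is $M$-invariant (here $|\pi(S)|\geqslant 2n-5\geqslant 5$, since $\pi$ already consists of that many primes of $S$), so $\theta$ admits an extension that I may multiply, through Gallagher's theorem and the degrees $u^\alpha\pm1$ of $\mathrm{PSL}_2(u^\alpha)$ from Lemma \ref{chpsl} together with the degrees $(u^\alpha\pm1)[G:G\cap\mathrm{PGL}_2(u^\alpha)]$ from Lemma \ref{lw}, to obtain irreducible characters of $G$ of degree divisible by $\theta(1)[G:M](u^\alpha\pm1)$. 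As all prime divisors of a single degree are mutually adjacent in $\Delta(G)$, this at once gives the completeness statements in (e) and, on specializing $\theta$, the adjacency of every $p\in\pi_F$ to all of $\pi(u^{2\alpha}-1)$ in (d); the estimate $|\pi_M|\leqslant 2$ with $\Delta(G)[\pi_M]$ complete follows from $[G:M]\mid|\mathrm{Out}(\mathrm{PSL}_2(u^\alpha))|$, so that $\pi_M\subseteq\pi(2\alpha)$, combined with the same constructions and a $K_n$-free count. I expect the main obstacle to be exactly this degree bookkeeping — confirming that the characters produced by Gallagher's theorem really are irreducible and that their degrees realize precisely the advertised prime products — because it is here that the two graph-theoretic hypotheses, the long odd cycle and $K_n$-freeness, must be inserted in just the right places both to establish the separations left open in (b) and to cap $|\pi_M|$ at two.
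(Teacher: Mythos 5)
Your proposal has two genuine gaps, both at points where you declare the conclusion rather than prove it. First, in part (a) you write that since $S$ is ``the unique non-abelian composition factor of $G$,'' the quotient $G/R(G)$ is almost simple with socle $S$. But nothing in Lemma \ref{cycle} makes $S$ the unique non-abelian composition factor: the lemma only controls $O^{\pi_0'}(G)$, and $G/R(G)$ could a priori contain further non-abelian chief factors centralizing $M/R(G)$. Ruling these out is the real content of part (a) in the paper: one sets $C/R(G)=C_{G/R(G)}(M/R(G))$, assumes $C\neq R(G)$, takes a chief factor $L/R(G)\cong T^k$ inside $C$, so that $LM/R(G)\cong S\times T^k$, and then kills $T$ using the $K_n$-free hypothesis. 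This requires a case split on $F=\pi(S)\cap\pi(T)$: if $|F|\geqslant 2$ the direct product immediately yields a $K_n$ inside $\Delta(S\times T^k)[\pi(u^\alpha+1)\cup\rho(T)]$; if $F=\{2\}$ then $3\nmid |T|$, the classification of simple $3'$-groups (Toborg--Waldecker) forces $T$ to be a Suzuki group, and Lemma \ref{chsz} supplies a triangle in $\Delta(T)$, again producing a $K_n$. None of this machinery (nor any substitute for it) appears in your proposal, and without it almost simplicity is unproven.

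Second, in part (b) you defer the essential non-adjacency statements --- that in $\Delta(G)[\pi_0]$ no edge joins $u$ to the rest and no edge crosses between $\pi(u^\alpha+1)\cap\pi_0$ and $\pi(u^\alpha-1)\cap\pi_0$ --- to ``the mechanism used for (d) and (e).'' That mechanism (Gallagher plus Lemma \ref{lw}) only \emph{constructs} character degrees and hence only produces edges; it can never certify a non-edge. The paper's argument is different and essential: take $\pi_0$ to be the vertex set of the odd cycle itself (not $\pi(S)$ or $\pi(S)-\{2\}$ as you propose --- the separation is provable only on the cycle's vertices at this stage, which is why later lemmas must work to show $\Delta(G)[\pi(S)]=\Delta(S)$ under extra hypotheses), and note that $[G:N]$ is a $\pi_0'$-number for $N=O^{\pi_0'}(G)$. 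If $x,y\in\pi_0$ from opposite parts were adjacent, some $\chi\in\mathrm{Irr}(G)$ has $xy\mid\chi(1)$, and Lemma \ref{fraction} forces $xy\mid\theta(1)$ for a constituent $\theta\in\mathrm{Irr}(N)$; since $N\cong\mathrm{SL}_2(u^\alpha)\times A$ with $A$ abelian has no degree divisible by odd primes from both $u^\alpha+1$ and $u^\alpha-1$, this is a contradiction. Your sketches of (c), (d), (e) are broadly aligned with the paper (rerunning Lemma \ref{cycle} and matching the socle for (c); Lemma \ref{ote} plus Gallagher and Lemma \ref{lw} for the edges in (d),(e); the $K_n$-free count via $\pi_M\cup\pi(u^\alpha+1)$ to get $|\pi_M|\leqslant 2$, where your appeal to $\mathrm{Out}(\mathrm{PSL}_2(u^\alpha))$ is unnecessary), but (c) as you state it again leans on the unproved uniqueness claim, so both gaps trace back to the missing centralizer argument and the missing $O^{\pi_0'}$/Clifford separation argument.
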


\begin{proof}
\textbf{a,b)}
 Since $\Delta(G)$ is an $n$-exact graph, for some $\pi_0\subseteq \rho(G)$, $\pi_0$ is the set of vertices of an odd cycle in $\Delta(G)^c$ and $|\pi_0|\geqslant 2n-5$. By Lemma \ref{cycle}, $N:=O^{\pi^\prime_0}(G)=R\times A$, where $A$ is abelian, $R\cong \rm{SL}_2(u^\alpha)$ or $R\cong \rm{PSL}_2(u^\alpha)$ for a prime $u\in \pi_0$ and a positive integer $\alpha$,
 and the primes in $\pi_0 - \{u\}$ are alternately odd divisors of $u^\alpha+1$ and  $u^\alpha-1$. Hence $m:=|\pi(u^\alpha\pm 1)\cap \pi_0|\geqslant n-3$ and $\Delta(G)[\pi(u^\alpha\pm 1)\cap \pi_0]\cong K_{m}$. Assume that for some $\epsilon\in \{\pm 1\}$, there exist $x\in (\pi(u^\alpha+\epsilon)\cap \pi_0)\cup \{u\}$ and  $y\in \pi(u^\alpha-\epsilon)\cap \pi_0$ so that $x$ and $y$ are adjacent vertices in $\Delta(G)$. Then for some $\chi \in \rm{Irr}(G)$, $xy$ divides $\chi(1)$. No let $\theta \in \rm{Irr}(N)$ be a constituent of $\chi_N$. By Lemma \ref{fraction}, $xy$ divides $\theta(1)$. It is a contradiction. Thus $\Delta(G)[\{u\}]$ and $\Delta(G)[\pi(u^\alpha \pm 1)\cap \pi_0]$ are precisely connected components of $\Delta(G)[\pi_0]$.
  Let $M:=NR(G)$. Then $S:=M/R(G)\cong N/R(N)\cong \rm{PSL}_2(u^\alpha)$ is a non-abelian minimal normal subgroup of $G/R(G)$.

 Let $C/R(G)=C_{G/R(G)}(M/R(G))$. We claim that $C=R(G)$ and $G/R(G)$ is an almost simple group with socle $S=M/R(G)$. Suppose on the contrary that $C\neq R(G)$ and let $L/R(G)$ be a chief factor of $G$ with $L\leqslant C$. Then $L/R(G)\cong T^k$, for some non-abelian simple group $T$ and some integer $k\geqslant 1$.
 As $L\leqslant C$, $LM/R(G)\cong L/R(G)\times M/R(G) \cong S\times T^k$. Let $F:=\pi (S)\cap \pi(T)$. Note that $\pi_0\cap \rho (T)=\emptyset$, $|\rho(T)|\geqslant 3$ and $2\in F$. Now one of the following cases occurs:\\
 Case 1. $|F|=1$: Then $F=\{2\}$. Hence $3$ does not divide $|T|$. Therefore by \cite{suzuki},  $T$ is a Suzuki simple group. By Lemma \ref{chsz}, $\Delta(T)\subseteq \Delta(S\times T^k)$ contains a triangle. Thus $\Delta(S\times T^k)[\pi(u^\alpha +1)\cup \rho(T)]$ contains a copy of $K_n$ and it is a contradiction.\\
Case 2. $|F|\geqslant 2$: Then it is easy to see that $\Delta(S\times T^k)[\pi(u^\alpha +1)\cup \rho(T)]$ contains a copy of $K_n$ and it is again a contradiction.\\
\textbf{c)}
 Let $\pi\subseteq \rho(G)$ be the set of vertices of an odd cycle in $\Delta(G)^c$. Then using Lemma \ref{cycle}, $N_1:=O^{\pi^\prime}(G)=R_1\times A_1$, where $A_1$ is abelian, $R_1\cong \rm{SL}_2(u_1^{\alpha_1})$ or $R_1\cong \rm{PSL}_2(u_1^{\alpha_1})$ for a prime $u_1\in \pi$ and a positive integer $\alpha_1$,
 and the primes in $\pi - \{u_1\}$ are alternately odd divisors of $u_1^{\alpha_1}+1$ and  $u_1^{\alpha_1}-1$. Thus $S_1:=N_1R(G)/R(G)\cong \rm{PSL}_2(u_1^{\alpha_1})$ is a minimal normal subgroup of $G/R(G)$ and $u_1\in \pi\subseteq \pi(S_1)$. Hence by part (a), $S=S_1$ and $u=u_1$.\\
 \textbf{d) }
  Using Lemma \ref{lw}, Lemma \ref{ote} and this fact that $\rm{SL}_2(u^\alpha)$ is the schur representation of $S$,  $\Delta(G)[\pi_M]$ is complete and every $p\in \pi_F$ is adjacent to all vertices in $\pi(u^{2\alpha}-1)$. If $|\pi_M|\geqslant 3$, then $\Delta(G)[\pi_M\cup \pi(u^\alpha+1)]$ contains a copy of $K_n$ and it is a contradiction. Thus $|\pi_M|\leqslant 2$. \\
  \textbf{e) }
Let $d=[G:M]$. Then $G/R(G)\cong S \rtimes \mathbb{Z}_d$. Note that $\pi(d) \cap \pi(S)=\emptyset$. Therefore as the schur multiplier of $S$ is trivial, the schur multiplier of $G/R(G)$ is too. Hence by Lemma \ref{lw} and Gallagher's Theorem, for every $\theta\in \rm{Irr}(R(G))$, $\theta(1)d(u^\alpha\pm 1)\in\rm{cd}(G)$ and it completes the proof.
\end{proof}

\noindent\textit{\bf Proof of Theorem A.}
By Lemma \ref{free} (a,b), there exists a normal subgroup $R(G)<M\leqslant G$ so that $G/R(G)$ is an almost simple group with socle $S:=M/R(G)\cong \rm{PSL}_2(u^\alpha)$, where $u$ is a prime, $\alpha$ is a positive integer and $u^\alpha > 5$. Also for some $\pi_0\subseteq \pi(S)$ and integer $m\geqslant n-3$, $\Delta(G)[\pi_0]$ precisely contains three connected components $\Delta(G)[\{u\}]$ and $\Delta(G)[\pi(u^\alpha\pm 1)\cap \pi_0]\cong K_{m}$.
 It is easy to see that $\rho(G)=\pi_F\uplus \pi(S)$. Using Lemma \ref{free} (d), $\Delta(G)[\pi_M]$ is a complete graph with at most two vertices and every $p\in \pi_F$ is adjacent to all vertices in $\pi(u^{2\alpha}-1)$.              Now one of the following cases occurs:\\
 Case 1: $|\pi(u^\alpha\pm 1)|=n-3$. Then by Lemma \ref{chpsl}, $u=2$ and $\Delta(G)[\pi_0]=\Delta(S)$. We claim that $|\pi_F|\leqslant 4$. On the contrary, we assume that $|\pi_F|\geqslant 5$. Let $d:=[G:M]$. Then  using Lemma \ref{lw}, $\pi(d) \cap \pi(S)=\emptyset$ and $\pi(d)\subseteq \pi_F$.
   If $\pi_M\neq \emptyset$, then as $|\pi_M|\leqslant 2$, using Lemmas \ref{bipartite} and \ref{free} (e), $\Delta(G)[\pi_F\cup \pi(u^\alpha+1)]$ contains a copy of $K_n$ which is impossible. Thus $\pi_M=\emptyset$ and $\pi_F=\pi_R$. Therefore using Lemma \ref{bipartite}, $\Delta(G)[\pi_F]$ contains a triangle and by Lemma \ref{free} (e), we again obtain a contradiction. Thus $|\pi_F|\leqslant 4$ and $|\rho(G)|\leqslant 2n-1$.\\
  Case 2: $|\pi(u^\alpha\pm 1)|\leqslant n-2$ and for some $\epsilon \in \{\pm 1\}$, $|\pi(u^\alpha+ \epsilon)|=n-2$. If $|\pi_F|\geqslant 3$, then using Lemma \ref{free} (c), $\Delta(G)[\pi_F]$ is a non-empty graph and  $\Delta(G)[\pi_F\cup\pi(u^\alpha+ \epsilon) ]$ contains a copy of $K_n$ which is a contradiction. Thus $|\pi_F|\leqslant 2$ and $|\rho(G)|\leqslant 2n-1$.\\
   Case 3: For some $\epsilon \in \{\pm 1\}$, $|\pi(u^\alpha+ \epsilon)|= n-1$. Then by Lemma \ref{free} (d), $\pi_F=\emptyset$ and so $|\rho(G)|\leqslant 2n-1$.\qed

   \subsection{Proof of Corollary B.}

\noindent Let $G$ be a finite group with extremal $n$-exact character graph $\Delta(G)$, for some integer $n\geqslant 5$.
Then using Theorem A, $|\rho(G)|=2n-5$ or $2n-1$. If $|\rho(G)|=2n-5$, then by Lemma \ref{Hamilton}, we are done. Thus we can assume that  $|\rho(G)|=2n-1$. By Lemma \ref{free}, there exists a normal subgroup $R(G)<M\leqslant G$ so that $G/R(G)$ is an almost simple group with socle $S:=M/R(G)\cong \rm{PSL}_2(u^\alpha)$, where $u$ is a prime, $\alpha$ is a positive integer and $u^\alpha > 5$.
 Also for some $\pi_0\subseteq \pi(S)$ and integer $m\geqslant n-3$, $\Delta(G)[\pi_0]$ precisely contains three connected components $\Delta(G)[\{u\}]$  and  $\Delta(G)[\pi(u^\alpha\pm 1)\cap \pi_0]\cong K_{m}$.
Since $  \Delta(G)$ is $K_n$-free, $\Delta(S)$ is too. The structure of $\Delta(S)$ is determined by  Lemma \ref{chpsl}.

\begin{lemma}\label{easy}
Let $\Delta(G)[\pi_F]$ be a non-empty graph and $|\pi(S)|\geqslant 2n-4$. Then $G$ does not exist.
\end{lemma}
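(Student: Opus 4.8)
The plan is to derive a contradiction by exhibiting a copy of $K_n$ inside $\Delta(G)$, which is impossible since $\Delta(G)$ is $K_n$-free; this forces $G$ not to exist. We work under the standing assumptions that $|\rho(G)|=2n-1$ and $\rho(G)=\pi_F\uplus\pi(S)$. The hypothesis $|\pi(S)|\geqslant 2n-4$ then gives $|\pi_F|\leqslant 3$ and, more importantly, the uniform identity $|\pi(u^{2\alpha}-1)|=|\pi(S)|-1\geqslant 2n-5$ (in both the case $u=2$, where $2\notin\pi(u^{2\alpha}-1)$, and the case $u$ odd, where $u\notin\pi(u^{2\alpha}-1)$). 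Since $\Delta(G)[\pi_F]$ is a non-empty graph, we may fix an edge $\{p,q\}$ of $\Delta(G)$ with $p,q\in\pi_F$.

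The key observation is that, by Lemma \ref{free}(d), both $p$ and $q$ are adjacent in $\Delta(G)$ to every vertex of $\pi(u^{2\alpha}-1)$. Hence it suffices to locate a clique $C\subseteq\pi(u^{2\alpha}-1)$ of $\Delta(G)$ with $|C|\geqslant n-2$: together with the edge $\{p,q\}$, and noting that $C\subseteq\pi(S)$ is disjoint from $\pi_F\ni p,q$, the set $\{p,q\}\cup C$ would then be a clique on at least $n$ vertices, giving the desired $K_n$. To produce such a $C$ I would first recall that the edges of $\Delta(S)$ persist in $\Delta(G)$: for $S\trianglelefteq G/R(G)$ any irreducible character of $G/R(G)$ lying over a character of $S$ has degree divisible by that of $S$, so an edge of $\Delta(S)$ is an edge of $\Delta(G/R(G))$, and the latter is a subgraph of $\Delta(G)$ because $G/R(G)$ is a quotient of $G$. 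Consequently the complete subgraphs of $\Delta(S)$ described in Lemma \ref{chpsl} remain complete in $\Delta(G)$, which is what lets us use the full sets $\pi(u^\alpha\pm1)$ rather than merely their intersections with $\pi_0$.

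It then remains to run the three cases of Lemma \ref{chpsl} and check that the largest complete piece of $\pi(u^{2\alpha}-1)$ already has at least $n-2$ vertices. When $u=2$ the set $\pi(u^{2\alpha}-1)=\pi(u^\alpha-1)\cup\pi(u^\alpha+1)$ is a disjoint union of two cliques of total size $\geqslant 2n-5$, so the larger has size at least $\lceil(2n-5)/2\rceil=n-2$. When $u$ is odd and one of $u^\alpha\pm1$ is a $2$-power, $\pi(u^{2\alpha}-1)$ is a single clique of size $\geqslant 2n-5\geqslant n$ (as $n\geqslant 5$). When $u$ is odd and neither is a $2$-power, $\pi(u^{2\alpha}-1)=\{2\}\cup M\cup P$ with $M,P$ complete and $2$ joined to all, so one of $\{2\}\cup M$, $\{2\}\cup P$ is a clique of size $\geqslant n-2$; here the shared vertex $2$ lies in $\pi(u^{2\alpha}-1)$ and is therefore adjacent to $p$ and $q$ as well, by Lemma \ref{free}(d). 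In every case I obtain the clique $C$ and hence $K_n\subseteq\Delta(G)$, the contradiction.

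The main technical point is the case analysis of the last paragraph together with the bookkeeping that small exceptional groups (notably $q=9$) are automatically excluded by the standing inequality $|\pi(S)|\geqslant 2n-4$, since such groups possess too few prime divisors. The one structural input that must not be overlooked is the persistence of $\Delta(S)$-edges in $\Delta(G)$, which upgrades the completeness statements of Lemma \ref{chpsl} from $\Delta(S)$ to $\Delta(G)$; without it one would be restricted to the weaker bound $m\geqslant n-3$ on $\pi(u^\alpha\pm1)\cap\pi_0$, which is too small to reach a $K_n$.
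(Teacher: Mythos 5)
Your proposal is correct and follows essentially the same route as the paper: the paper likewise fixes an edge of $\Delta(G)[\pi_F]$, uses Lemma \ref{free}(d) for the adjacencies into $\pi(u^{2\alpha}-1)$, and chooses $\epsilon\in\{\pm 1\}$ with $|\pi(u^\alpha+\epsilon)|\geqslant n-2$ (your clique $C$ is exactly $\pi(u^\alpha+\epsilon)$ in each case of your analysis) to exhibit a $K_n$ inside $\Delta(G)[\pi_F\cup\pi(u^\alpha+\epsilon)]$. You merely make explicit two points the paper leaves implicit, namely the persistence of $\Delta(S)$-edges in $\Delta(G)$ and the case analysis of Lemma \ref{chpsl}, which is fine.
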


\begin{proof}
Since $|\pi(S)|\geqslant 2n-4$, there exists $\epsilon \in \{\pm 1\}$ such that $|\pi(u^\alpha+\epsilon)|\geqslant n-2$. Thus using Lemma \ref{free} (d), $\Delta(G)[\pi_F\cup \pi(u^\alpha+\epsilon)]$ contains a copy of $K_n$ and it is a contradiction.
\end{proof}

\begin{lemma}\label{threevertex}
If $|\pi(u^\alpha\pm 1)|=n-3$, then $u=2$ and $G\cong S\times A\times B$, where $A$ and $B$ are disconnected groups of disconnected Types $1$ or $4$, $|\rho(A)|=|\rho(B)|=2$ and $\rho(G)=\pi(S)\uplus \rho(A)\uplus \rho(B)$.
\end{lemma}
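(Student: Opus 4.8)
The plan is first to pin down the shape of $\Delta(G)$ and then to peel off the almost simple part so as to expose a direct product. Since $|\rho(G)|=2n-1$ and $\rho(G)=\pi_F\uplus\pi(S)$, I begin by forcing $u=2$ and $|\pi_F|=4$. If $u$ were odd, then $2$ divides both $u^\alpha-1$ and $u^\alpha+1$, while $2\notin\pi_0$ because by Lemma~\ref{cycle} the non-$u$ vertices of the odd cycle are odd divisors of $u^{2\alpha}-1$; hence each clique $\pi_0\cap\pi(u^\alpha\pm1)$ misses the prime $2$ and has size at most $|\pi(u^\alpha\pm1)|-1=n-4$, contradicting $m\geqslant n-3$. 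Thus $u=2$, and combining Lemma~\ref{chpsl}(a) with $|\pi(2^\alpha\pm1)|=n-3$ gives $m=n-3$, $\pi_0=\pi(S)$, $\Delta(G)[\pi(S)]=\Delta(S)$, $|\pi(S)|=2n-5$, and therefore $|\pi_F|=4$.

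Next I would record the constraints forced by $K_n$-freeness. Writing $d:=[G:M]$, every prime $r\mid d$ must lie in $\pi_F$: otherwise the degrees $(2^\alpha\pm1)d$ furnished by Lemma~\ref{lw} would make $r$ adjacent across the two ``sides'' of $\pi(S)$ or to $2$, contradicting $\Delta(G)[\pi(S)]=\Delta(S)$. Hence $\pi([G:M])\subseteq\pi_F$ and Lemma~\ref{free}(e) applies, so $\Delta(G)[\pi(\theta(1)d(2^\alpha\pm1))]$ are complete for every $\theta\in\rm{Irr}(R(G))$. Taking $\theta=1$ and using $|\pi(2^\alpha+1)|=n-3$ yields $|\pi(d)|\leqslant2$; the same device (applied to a hypothetical $\theta$ with a prime in $\pi(S)$) shows $\rho(R(G))\cap\pi(S)=\emptyset$, so $\rho(R(G))=\pi_R$. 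Finally, since by Lemma~\ref{free}(d) every vertex of $\pi_F$ is adjacent to all of $\pi(2^\alpha-1)\cup\pi(2^\alpha+1)$, any triangle inside $\pi_F$ would extend to a copy of $K_n$; hence $\Delta(G)[\pi_F]$ is triangle-free.

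The main obstacle is eliminating the field automorphisms, i.e.\ proving $\pi_M=\emptyset$ and ultimately $d=1$. I would argue by the size of $\pi_M$, noting $\pi_R\cap\pi_M=\emptyset$ (as $\pi_R\subseteq\rho(R(G))\subseteq\rho(M)$ while $\pi_M$ avoids $\rho(M)$) and $|\pi_M|\leqslant2$. If $|\pi_M|=2$ then $\pi(d)=\pi_M$ (since $\pi_M\subseteq\pi(d)$ and $|\pi(d)|\leqslant2$), and choosing any $p\in\rho(R(G))=\pi_R$ with $p\mid\theta(1)$ makes $\{p\}\cup\pi(d)\cup\pi(2^\alpha+1)$ a clique of size $1+2+(n-3)=n$, a contradiction. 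If $|\pi_M|=1$ then $|\pi_R|=3$, so $\Delta(R(G))$ has an edge $p_1p_2$ by Lemma~\ref{bipartite}, and $\{p_1,p_2\}\cup\pi_M\cup\pi(2^\alpha+1)$ is again a $K_n$. Therefore $\pi_M=\emptyset$, $\rho(R(G))=\pi_R=\pi_F$ has four vertices, and $\pi(d)\subseteq\pi_F$.

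Finally I would identify $R(G)$. As $R(G)$ is solvable with four character-degree primes and $\Delta(G)[\pi_F]$ is triangle-free, any triangle in $\Delta(R(G))$ would lift to one in $\Delta(G)[\pi_F]$; so Lemma~\ref{square} forces $\Delta(R(G))\cong C_4$, say on the cycle $p_1p_2p_3p_4$, where each edge is realised by some $\theta$ with $\pi(\theta(1))$ exactly that edge-pair. If $d>1$, pick $r\in\pi(d)\subseteq\pi_F$, say $r=p_1$, and combine it with the opposite edge $\{p_3,p_4\}$: Lemma~\ref{free}(e) makes $\{p_1,p_3,p_4\}\cup\pi(2^\alpha+1)$ a clique of size $n$, which is impossible. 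Hence $d=1$, $G/R(G)=S$, and Lemma~\ref{direct product} gives $G\cong S\times R(G)$. Since $\Delta(R(G))\cong C_4$, Lemma~\ref{square2} yields $R(G)\cong A\times B$ with $A,B$ disconnected and $|\rho(A)|=|\rho(B)|=2$; because $2\in\pi(S)$ is disjoint from $\pi_F=\rho(R(G))$, neither $\rho(A)$ nor $\rho(B)$ contains $2$, so Lemma~\ref{type} identifies $A$ and $B$ as disconnected groups of Type $1$ or $4$. The decompositions $\rho(G)=\pi(S)\uplus\rho(A)\uplus\rho(B)$ and $G\cong S\times A\times B$ then follow.
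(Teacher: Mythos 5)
Your argument is correct and follows essentially the same route as the paper's proof: both force $u=2$, $\Delta(G)[\pi(S)]=\Delta(S)$ and $\pi(d)\subseteq\pi_F$, then eliminate $d\neq 1$ via the cliques of Lemma \ref{free}(e) combined with Lemma \ref{bipartite}, and conclude with Lemmas \ref{direct product}, \ref{square}, \ref{square2} and \ref{type}; your case split on $|\pi_M|$ is equivalent to the paper's split on $|\pi(d)|$, since $\pi(d)\subseteq\pi_F$ and $\pi(d)\cap\pi_R=\emptyset$ force $\pi(d)=\pi_M$. For the same reason your final case ($d>1$ with $\pi_M=\emptyset$) is actually vacuous, so the extra clique argument there, while sound, is not needed.
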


\begin{proof}
Since $|\pi(u^\alpha\pm 1)|=n-3$, $\pi(S)=\pi_0$. Thus $|\pi_F|=4$, $u=2$ and $\Delta(G)[\pi(S)]=\Delta(S)$. If $d:=[G:M]$, then using Lemma \ref{lw}, $\pi(d) \cap \pi(S)=\emptyset$ and so $\pi(d)\subseteq \pi_F$. Now we claim that $d=1$ and $G=M$. On the contrary, we assume that $d\neq 1$. If $|\pi(d)|\geqslant 2$, then by Lemma \ref{free} (e), $\Delta(G)[\pi_F \cup \pi(u^\alpha+1)]$ contains a copy of $K_n$ which is a contradiction. Also if $|\pi(d)|=1$, then using Lemma \ref{bipartite}, $\Delta(G)[\pi_R]$ is non-empty and by Lemma \ref{free} (e), $\Delta(G)[\pi_F \cup \pi(u^\alpha+1)]$ again contains a copy of $K_n$ and it is a contradiction. Therefore $d=1$ and $G=M$.
 Hence by Lemma \ref{direct product},  $G\cong  S\times R(G)$. Thus as $\Delta(G)[\pi(S)]=\Delta(S)$, $\pi(S)\cap \rho(R(G))=\emptyset$ and $\rho(R(G))=\pi_R=\pi_F$. Also as $\Delta(G)$ is $K_n$-free, $\Delta(R(G))$ is $K_3$-free. Hence  using Lemma \ref{square}, $\Delta(R(G))\cong C_4$. Thus by Lemma \ref{square2}, $R(G)\cong A\times B$, where $A$ and $B$ are disconnected groups with $|\rho(A)|=|\rho(B)|=2$. Therefor by Lemma \ref{type},  $A$ and $B$ are of disconnected Types $1$ or $4$ and the proof is completed.
\end{proof}

\begin{lemma}\label{two7}
If  $|\pi(u^\alpha\pm 1)|=n-2$ and $u=2$, then $G\cong S\times R(G)$, where
 $R(G)$ is a disconnected  group of disconnected Type $1$ or $4$. Also $|\rho(R(G))|=2$ and $\rho(G)=\pi(S) \uplus \rho(R(G))$.
\end{lemma}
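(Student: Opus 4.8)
The plan is to follow the same skeleton as in Lemma~\ref{threevertex}, but here the identity $\Delta(G)[\pi(S)]=\Delta(S)$ will no longer be immediate, and securing it is where the real work lies. First I would fix the arithmetic. Since $u=2$, Lemma~\ref{chpsl}(a) gives $\pi(S)=\{2\}\uplus\pi(2^\alpha-1)\uplus\pi(2^\alpha+1)$ with $|\pi(2^\alpha\pm1)|=n-2$, so $|\pi(S)|=2n-3$; as $\rho(G)=\pi_F\uplus\pi(S)$ and $|\rho(G)|=2n-1$, this forces $|\pi_F|=2$. Because $|\pi(S)|=2n-3\geqslant 2n-4$, Lemma~\ref{easy} then forbids $\Delta(G)[\pi_F]$ from being non-empty, so $\pi_F=\{p_1,p_2\}$ with $p_1,p_2$ non-adjacent in $\Delta(G)$.

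Next I would show $d:=[G:M]=1$. Assume not. By Lemma~\ref{lw}, $\pi(d)\cap\pi(S)=\emptyset$, whence $\pi(d)\subseteq\pi_F$ and Lemma~\ref{free}(e) is available. If $\pi(d)=\pi_F$, taking $\theta=1_{R(G)}$ makes $\Delta(G)[\pi(d(2^\alpha+1))]=\Delta(G)[\pi_F\cup\pi(2^\alpha+1)]$ complete, so $p_1,p_2$ are adjacent, a contradiction. If $\pi(d)=\{p_1\}$, then $p_2\in\pi_R\subseteq\rho(R(G))$, so $p_2\mid\theta(1)$ for some $\theta\in\rm{Irr}(R(G))$, and (e) again makes $\Delta(G)[\{p_1,p_2\}\cup\pi(2^\alpha+1)]$ complete, forcing $p_1\sim p_2$. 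Hence $d=1$, $G=M$, and $G/R(G)=S$.

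The crux is to prove $\rho(R(G))\cap\pi(S)=\emptyset$. Since $d=1$, for every $\theta\in\rm{Irr}(R(G))$ the degrees $\theta(1)(2^\alpha\pm1)$ lie in $\rm{cd}(G)$ (Lemma~\ref{free}(e)). Suppose some $q\in\rho(R(G))$ lay in $\pi(2^\alpha+1)$; choosing $\theta$ with $q\mid\theta(1)$, the degree $\theta(1)(2^\alpha-1)$ shows $q$ is adjacent to every prime of $\pi(2^\alpha-1)$, so for any $p'\in\pi_F$ (which by Lemma~\ref{free}(d) is joined to $q$ and to all of $\pi(2^\alpha-1)\subseteq\pi(2^{2\alpha}-1)$) the set $\{p',q\}\cup\pi(2^\alpha-1)$ is a $K_n$, impossible; the case $q\in\pi(2^\alpha-1)$ is symmetric. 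To exclude $2\in\rho(R(G))$ I would use the cycle instead: by Lemma~\ref{free}(c), $2=u$ lies on the odd cycle $\pi_0$ of $\Delta(G)^c$, hence in $\Delta(G)$ it is non-adjacent to its two cycle-neighbours, which lie in $\pi(2^\alpha\pm1)$; but $2\in\rho(R(G))$ would, via (e), make $2$ adjacent to all of $\pi(2^\alpha-1)\cup\pi(2^\alpha+1)$, a contradiction. Thus $\rho(R(G))\cap\pi(S)=\emptyset$.

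With this disjointness in hand I would invoke the decomposition $\rm{cd}(G)=\bigcup_{\theta}\theta(1)\,\rm{cd}(S)$, valid because each $\theta\in\rm{Irr}(R(G))$ is $M$-invariant (Lemma~\ref{ote}), the Schur multiplier of $S\cong\rm{PSL}_2(2^\alpha)$ with $2^\alpha>5$ is trivial, and Gallagher's Theorem applies. Any edge of $\Delta(G)$ inside $\pi(S)$ must then arise from a single $s\in\rm{cd}(S)\subseteq\{1,2^\alpha,2^\alpha-1,2^\alpha+1\}$, which pins both endpoints into one part $\pi(2^\alpha-1)$ or $\pi(2^\alpha+1)$; hence $\Delta(G)[\pi(S)]=\Delta(S)$ and Lemma~\ref{direct product} yields $G\cong S\times R(G)$. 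Consequently $\rho(G)=\pi(S)\uplus\rho(R(G))$, $|\rho(R(G))|=2$, and $2\notin\rho(R(G))$; since $\Delta(R(G))=\Delta(G)[\pi_F]$ is empty, $R(G)$ is disconnected, and Lemma~\ref{type} identifies it as being of disconnected Type $1$ or $4$. The main obstacle, as indicated, is the disjointness step: unlike Lemma~\ref{threevertex}, the cycle $\pi_0$ need not exhaust $\pi(S)$ (we only know $m\geqslant n-3$), so the component structure of $\Delta(G)[\pi(S)]$ cannot be read directly from Lemma~\ref{free}(b), and the $K_n$-forcing from (d),(e) together with the cycle non-adjacency (c) is what must replace that shortcut.
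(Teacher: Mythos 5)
Your overall architecture is sound and close to the paper's: same toolkit (Lemmas \ref{lw}, \ref{free}(d),(e), \ref{ote}, Gallagher, \ref{direct product}, \ref{type}), with a reasonable reordering — you extract the non-edge on $\pi_F$ up front from Lemma \ref{easy}, and you prove $\rho(R(G))\cap\pi(S)=\emptyset$ \emph{before} $\Delta(G)[\pi(S)]=\Delta(S)$, whereas the paper proves $\Delta(G)[\pi(S)]=\Delta(S)$ first (via a WLOG $a\mid\theta(1)$ argument producing a $K_n$ on $\pi(u^\alpha+\epsilon)\cup\{a,q\}$) and gets disjointness afterwards; both orders work. But there is one genuine gap: the opening of your $d=1$ step, ``By Lemma \ref{lw}, $\pi(d)\cap\pi(S)=\emptyset$.'' Lemma \ref{lw} only supplies the degrees $d(2^\alpha\pm1)$; by itself it says nothing about $\pi(d)$ avoiding $\pi(S)$ (here $d$ divides $\alpha$, and primes of $\alpha$ may well divide $2^{2\alpha}-1$, or equal $2$). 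This is precisely where the paper's proof of this lemma does its real work, and it is governed by the very subtlety you flag at the end but apply only to the disjointness step: since $|\pi(2^\alpha\pm1)|=n-2$ while the cycle only guarantees $m\geqslant n-3$, $\pi_0$ may miss one prime $x\in\pi(2^\alpha+1)$ and one prime $y\in\pi(2^\alpha-1)$, so the component structure of $\Delta(G)[\pi_0]$ does not control all of $\pi(S)$. The paper accordingly splits: first it rules out $\pi_F=\pi_M$ (securing $\pi_R\neq\emptyset$), then for $z\in\{x,y\}\cap\pi(d)$ it builds a $K_n$ inside $\Delta(G)[\pi_R\cup\{z\}\cup\pi(2^{2\alpha}-1)]$ using Lemmas \ref{lw} and \ref{free}(d), and for $p\in\pi_0\cap\pi(d)$ it notes that Lemma \ref{lw} would make $p$ adjacent to all of $\pi_0-\{u,p\}$, contradicting the three-component structure of $\Delta(G)[\pi_0]$ — the latter also disposes of $p=2=u$, a case your $K_n$-counting cannot reach, since \ref{free}(d) joins $\pi_F$ only to $\pi(2^{2\alpha}-1)$ and not to $2$.

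The gap is fillable with techniques you already deploy: for odd $p\in\pi(d)\cap\pi(2^\alpha+\epsilon)$, the degree $d(2^\alpha-\epsilon)$ from Lemma \ref{lw} makes $\{p\}\cup\pi(2^\alpha-\epsilon)$ a clique of size $n-1$, and adjoining one vertex of $\pi_R$ (non-empty in your setup, because $\Delta(G)[\pi_M]$ is complete by \ref{free}(d) while $p_1,p_2$ are non-adjacent, so at most one of them lies in $\pi_M$) yields a $K_n$ via \ref{free}(d); and $2\in\pi(d)$ contradicts $\{2\}$ being a connected component of $\Delta(G)[\pi_0]$ — the same cycle/component trick you use to exclude $2\in\rho(R(G))$. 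With $\pi(d)\subseteq\pi_F$ thus established, everything downstream in your proposal — the two-case elimination of $d\neq1$ via \ref{free}(e), the disjointness argument, the Gallagher decomposition $\mathrm{cd}(G)=\bigcup_\theta\theta(1)\,\mathrm{cd}(S)$ forcing $\Delta(G)[\pi(S)]=\Delta(S)$, and the appeals to Lemmas \ref{direct product} and \ref{type} — is correct.
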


\begin{proof}
Since $|\rho(G)|=2n-1$, $|\pi_F|=2$. If $\pi_F=\pi_M$, then using Lemma \ref{free} (d), $\Delta(G)[\pi_F \cup \pi(u^\alpha+1)]$ is a copy of $K_n$ which is a contradiction.
  Hence $\pi_R$ is non-empty. Note that there exist $x\in \pi(u^\alpha+1)$ and $y\in \pi(u^\alpha-1)$ such that $\pi(u^\alpha+1)= \pi_0 \cup \{x\}$ and $\pi(u^\alpha-1)= \pi_0 \cup \{y\}$. Let $d:=[G:M]$. We claim that $x,y\notin \pi(d)$. On the contrary, we assume that $z\in \{x,y\} \cap \pi(d)$. Then using Lemmas \ref{lw} and \ref{free} (d), $\Delta(G)[\pi_R \cup \{z\} \cup \pi(u^{2\alpha}-1)]$ contains a copy of $K_n$ which is impossible. Thus $x,y\notin \pi(d)$.
   Suppose  $p\in \pi_0\cap \pi(d)$. Then by Lemma \ref{lw}, $p$ is adjacent to all vertices in $\pi_0-\{u,p\}$. It is a contradiction with the structure of $\Delta(G[\pi_0])$.
   Therefore $\pi(d)\subseteq \pi_F$.  If $d\neq 1$, then by Lemma \ref{free} (e), $\Delta(G)[\pi_F \cup \pi(u^\alpha+1)]$ contains a copy of $K_n$ which is a contradiction. Thus $d=1$ and $G=M$.
Hence there exist distinct primes $q$ and $q^\prime$ such that $\pi_F=\pi_R=\{q,q^\prime\}$. Note that as the schur multiplier of $S$ is trivial, by Gallagher's Theorem, $q$ and $q^\prime$ are adjacent to all vertices in $\pi(S)$.  Now we claim that $\Delta(G)[\pi(S)]=\Delta(S)$. On the contrary, we assume that there exist adjacent vertices $a,b \in \pi(S)$ in $\Delta(G)$ such that $a$ and $b$ are non-adjacent vertices in $\Delta(S)$.
 Then for some $\chi \in \rm{Irr}(G)$, $ab$ divides $\chi(1)$. Let ${\theta} \in \rm{Irr}(R(G))$ be a constituent of $\chi_{R(G)}$. Then using  Lemma \ref{ote},
 ${\theta}$ is $G$-invariant. Thus as the Schur multiplier of $S$ is trivial, by Gallagher's Theorem, $\rm{cd}(G|{\theta})=\{m{\theta}(1)|\,m\in \rm{cd}(S)\}$. Hence as $\chi(1)\in \rm{cd}(G|{\theta})$, ${\theta}(1)$ is divisible by $a$ or $b$. Without loss of generality, we assume that $a$ divides ${\theta}(1)$. There exists $\epsilon\in \{\pm 1\}$ so that $a\notin \pi(u^\alpha+\epsilon)$. Thus the induced subgraph of $\Delta(G)$ on $X:=\pi(u^\alpha+\epsilon)\cup\{a,q\}$ is a copy of $K_n$ and it is a contradiction.
Therefore $\Delta(G)[\pi(S)]=\Delta(S)$. Hence by Lemma \ref{direct product}, $G \cong S\times R(G)$. If $\rho (R(G))\cap \pi(S)\neq \emptyset$, then the induced subgraph of $\Delta(G)$ on $\pi(S)\cup \{q\}$ contains  a copy of $K_n$ and it is a contradiction. Thus as $\Delta(G)$ is $K_n$-free, it is easy to see that $\rho(R(G))=\pi_R=\{q,q^\prime\}$ and $\Delta(R(G))\cong K_2^c$. Therefore by Lemma \ref{type}, $R(G)$ is a disconnected group of disconnected Type $1$ or $4$. Finally, $\rho(G)=\pi(S)\uplus \rho(R(G))$ and the proof is completed.
\end{proof}

\begin{lemma}\label{five7}
If $|\pi(u^\alpha\pm 1)|=n-1$ and $u=2$, then $G\cong S\times R(G)$, where $R(G)$ is abelian and $\rho(G)=\pi(S)$.
 \end{lemma}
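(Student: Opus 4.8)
The plan is to follow closely the strategy already used in Lemmas \ref{threevertex} and \ref{two7}, but to exploit the new feature of this case: here both cliques $\pi(u^\alpha+1)$ and $\pi(u^\alpha-1)$ are forced to have the maximal admissible size $n-1$, so that adjoining a \emph{single} new vertex to either one already produces a forbidden $K_n$. I would first clear out $\pi_F$. Since for some $\epsilon\in\{\pm1\}$ we have $|\pi(u^\alpha+\epsilon)|=n-1$, Lemma \ref{free}(d) makes every $p\in\pi_F$ adjacent to all of $\pi(u^{2\alpha}-1)\supseteq\pi(u^\alpha+\epsilon)$, so $\{p\}\cup\pi(u^\alpha+\epsilon)$ would be a copy of $K_n$. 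Hence $\pi_F=\emptyset$, $\rho(G)=\pi(S)$, $|\pi(S)|=2n-1$, and consequently both $|\pi(u^\alpha\pm1)|=n-1$.

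Next I would show $d:=[G:M]=1$. By Lemma \ref{lw} the quotient $G/R(G)$ affords character degrees $(u^\alpha+1)d$ and $(u^\alpha-1)d$, so any $p\in\pi(d)$ is adjacent in $\Delta(G)$ to every prime of $\pi(u^\alpha+1)$ and to every prime of $\pi(u^\alpha-1)$. As $p$ cannot lie in both of the coprime sets $\pi(u^\alpha\pm1)$, adjoining $p$ to whichever size-$(n-1)$ clique avoids it yields a $K_n$. Therefore $\pi(d)=\emptyset$, whence $G=M$ and $G/R(G)=S\cong\rm{PSL}_2(2^\alpha)$.

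The heart of the argument, and the step I expect to be the main obstacle, is proving $\Delta(G)[\pi(S)]=\Delta(S)$. Suppose on the contrary that $a,b\in\pi(S)$ are adjacent in $\Delta(G)$ but lie in different connected components of $\Delta(S)$. Choose $\chi\in\rm{Irr}(G)$ with $ab\mid\chi(1)$ and a constituent $\theta\in\rm{Irr}(R(G))$ of $\chi_{R(G)}$. Lemma \ref{ote} applies (as $|\pi(S)|=2n-1\geqslant4$, $\Delta(G)^c$ is non-bipartite, and $G=M$ with $G/R(G)=S$), so $\theta$ is $G$-invariant; since the Schur multiplier of $S$ is trivial, Gallagher's Theorem gives $\rm{cd}(G|\theta)=\{t\theta(1):t\in\rm{cd}(S)\}$ with $\rm{cd}(S)=\{1,2^\alpha-1,2^\alpha,2^\alpha+1\}$. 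Because every element of $\rm{cd}(S)$ has all its prime divisors inside a single component of $\Delta(S)$, at most one of $a,b$ can divide $t$, so at least one of them divides $\theta(1)$; say $a\mid\theta(1)$. Picking $\epsilon$ with $a\notin\pi(u^\alpha+\epsilon)$, the degree $\theta(1)(u^\alpha+\epsilon)\in\rm{cd}(G)$ is divisible by $a$ and by every prime of $\pi(u^\alpha+\epsilon)$, so $\{a\}\cup\pi(u^\alpha+\epsilon)$ is a copy of $K_n$, a contradiction. The delicacy here is the bookkeeping of which prime falls into $\theta(1)$ versus $t$, and the repeated use of the fact that the surviving clique has size exactly $n-1$.

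With $\Delta(G)[\pi(S)]=\Delta(S)$ established, Lemma \ref{direct product} (with $q=2^\alpha$ and $G/R(G)=S$) yields $G\cong S\times R(G)$. It remains to see that $R(G)$ is abelian: otherwise some $r\in\rho(R(G))\subseteq\rho(G)=\pi(S)$ divides a degree $\delta>1$ of $R(G)$, and then in the direct product the degree $(u^\alpha+\epsilon)\delta\in\rm{cd}(G)$, for $\epsilon$ chosen with $r\notin\pi(u^\alpha+\epsilon)$, makes $r$ adjacent to the whole size-$(n-1)$ clique $\pi(u^\alpha+\epsilon)$, again producing a $K_n$ (equivalently, an edge absent from $\Delta(S)$). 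Hence $R(G)$ is abelian, and since $\pi_F=\emptyset$ we conclude $\rho(G)=\pi(S)$, completing the proof.
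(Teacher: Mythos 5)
Your proposal is correct and follows essentially the same route as the paper's proof: eliminate $\pi_F$ and force $G=M$ via Lemma \ref{lw}, establish $\Delta(G)[\pi(S)]=\Delta(S)$ by combining Lemma \ref{ote}, the triviality of the Schur multiplier of $S$, and Gallagher's Theorem to push a prime into $\theta(1)$ and manufacture a $K_n$, then apply Lemma \ref{direct product} and conclude $R(G)$ is abelian from $K_n$-freeness. Your only deviations are cosmetic — you derive $\pi_F=\emptyset$ from Lemma \ref{free}(d) rather than from the cardinality count $|\rho(G)|=2n-1=|\pi(S)|$, and you spell out the final $K_n$ argument that the paper leaves implicit.
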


\begin{proof}
Since $|\rho(G)|=2n-1$, $\rho(G)=\pi(S)$.
If $G \neq M$, then using Lemma \ref{lw}, $\Delta(G)$ contains a copy of $K_n$ which is impossible.
  Thus $G=M$. We claim that $\Delta(G)[\pi(S)]=\Delta(S)$. On the contrary, we assume that there exist non-adjacent vertices $x$ and $y$ in $\Delta(S)$ such that $x$ and $y$ are adjacent vertices in $\Delta(G)$. Thus for some $\chi\in \rm{Irr}(G)$, $\chi(1)$ is divisible by $xy$. Let $\theta\in \rm{Irr}(R(G))$ be a constituent of $\chi_{R(G)}$. Using Lemma \ref{ote}, $\theta$ is $G$-invariant. Thus as the Schur multiplier of $S$ is trivial, $\theta$ extends to $G$. By Gallagher's Theorem, $\rm{cd}(G|\theta)=\{m\theta(1)|\;m\in \rm{cd}(S)\}$.  Thus as $\chi(1)\in \rm{cd}(G|\theta)$ is divisible by $xy$, $\theta(1)$ is divisible by $x$ or $y$. Hence one of the character degrees $\theta(1)(u^\alpha-1)$ or $\theta(1)(u^\alpha+1)$ of $G$ is divisible by $n$ distinct primes which is a contradiction.
 Therefore $\Delta(G)[\pi(S)]=\Delta(S)$. Thus using Lemma \ref{direct product}, $G\cong S\times R(G)$.
Hence as $\Delta(G)$ is $K_n$-free, $R(G)$ is abelian and we are done.
\end{proof}

\begin{lemma}\label{final}
If either $u=2$ and $|\pi(u^\alpha+1)|\neq |\pi(u^\alpha-1)|$, or $u$ is odd, then $G$ does not exist.
\end{lemma}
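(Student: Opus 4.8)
My plan is to derive a contradiction in each of the two stated situations by a size count on the complete pieces of $\Delta(S)$ combined with the forbidden-subgraph constraints already in hand. Set $q=u^\alpha$. Recall from Lemma \ref{free}(b) that inside $\pi_0$ the graph $\Delta(G)$ has exactly the three components $\{u\}$ and two copies of $K_m$ with $m\geqslant n-3$, one lying in $\pi(q+1)$ and one in $\pi(q-1)$; since the non-central vertices of the odd cycle are \emph{odd} divisors of $q\pm1$, we have $2\notin\pi_0$. Using $\Delta(S)\subseteq\Delta(G)[\pi(S)]$ and Lemma \ref{chpsl}, each of $\pi(q+1)$ and $\pi(q-1)$ (in the odd case, after deleting the shared prime $2$) is a clique of $\Delta(G)$ of size at least $m\geqslant n-3$, and being $K_n$-free each such clique has at most $n-1$ vertices. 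The remaining ingredients are the counting identity $2n-1=|\rho(G)|=|\pi(S)|+|\pi_F|$ and Lemma \ref{free}(d), by which every prime of $\pi_F$ is adjacent to all of $\pi(u^{2\alpha}-1)=\pi(q-1)\cup\pi(q+1)$. I will also repeatedly invoke Lemma \ref{easy}: once I check $|\pi(S)|\geqslant 2n-4$, it forces $\Delta(G)[\pi_F]$ to be edgeless. The whole argument then funnels into one of two terminal contradictions: either a copy of $K_n$, or an odd cycle of $\Delta(G)^c$ whose vertex set lies in $\pi_F$, which is impossible by Lemma \ref{free}(c) because $\pi_F\cap\pi(S)=\emptyset$ and $u\notin\pi_F$.

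In the even case $u=2$ the two cliques are $\pi(q+1)$ and $\pi(q-1)$, of sizes $a\neq b$ with $n-3\leqslant a,b\leqslant n-1$ (using $n\geqslant 5$). Because $a\neq b$ they cannot both equal $n-3$, so $a+b\geqslant 2n-5$ and $|\pi(S)|=1+a+b\geqslant 2n-4$; hence $\Delta(G)[\pi_F]$ is edgeless and $|\pi_F|=2n-2-a-b$. The admissible unordered pairs $\{a,b\}$ then reduce to $\{n-1,n-3\}$, $\{n-1,n-2\}$, and $\{n-2,n-3\}$, with $|\pi_F|$ equal to $2$, $1$, and $3$ respectively. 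When $\max\{a,b\}=n-1$ (the first two pairs) the set $\pi_F$ is nonempty, and adjoining one of its primes to the corresponding $(n-1)$-clique produces $K_n$. When $\{a,b\}=\{n-2,n-3\}$ we have $|\pi_F|=3$ with $\Delta(G)[\pi_F]$ edgeless, so $\pi_F$ is the vertex set of a triangle in $\Delta(G)^c$, against Lemma \ref{free}(c). Thus no such $G$ exists.

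In the odd case, Lemma \ref{free}(b) shows the two halves $\pi(q-1)\cap\pi_0$ and $\pi(q+1)\cap\pi_0$ are nonempty and non-adjacent, so Lemma \ref{chpsl}(b) must be in its case (ii): the cliques are $M:=\pi(q-1)\setminus\{2\}$ and $P:=\pi(q+1)\setminus\{2\}$ with $|M|,|P|\geqslant n-3$, and $2$ is adjacent in $\Delta(S)$ to all of $M\cup P$. Then $\{2\}\cup M$ and $\{2\}\cup P$ are cliques, forcing $|M|,|P|\leqslant n-2$; also $|\pi(S)|=2+|M|+|P|\geqslant 2n-4$, so $\Delta(G)[\pi_F]$ is edgeless and $|\pi_F|=2n-3-|M|-|P|\in\{1,2,3\}$ is nonempty. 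Picking any $p\in\pi_F$, the set $\{p,2\}\cup M$ is a clique, which to avoid $K_n$ forces $|M|\leqslant n-3$, hence $|M|=n-3$; symmetrically $|P|=n-3$ and $|\pi_F|=3$, so $\pi_F$ is again an edgeless triple yielding a triangle of $\Delta(G)^c$ disjoint from $\pi(S)$, against Lemma \ref{free}(c). I expect the main obstacle to be precisely this bookkeeping: reading off from Lemma \ref{free}(b) that the two halves genuinely sit in distinct cliques (so that the non-complete case of Lemma \ref{chpsl}(b) applies), and tracking how the shared prime $2$ in the odd case sharpens the bound from $n-2$ to $n-3$ and thereby pins $|\pi_F|=3$, which is exactly what converts the problem into the forbidden complement-triangle of Lemma \ref{free}(c).
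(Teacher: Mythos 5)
Your proof is correct and takes essentially the same route as the paper's: the same size bounds $n-3\leqslant|\pi(u^\alpha\pm1)|\leqslant n-1$ from Lemmas \ref{free}(b) and \ref{chpsl}, the same count $|\pi_F|=2n-1-|\pi(S)|$, and the same two terminal contradictions (a copy of $K_n$ obtained by adjoining a prime of $\pi_F$ via Lemma \ref{free}(d) or Lemma \ref{easy}, or an edgeless triple in $\pi_F$ giving a triangle in $\Delta(G)^c$ against Lemma \ref{free}(c)). The only difference is bookkeeping: you organize by the parity of $u$ and, in the odd case, force $|M|=|P|=n-3$ uniformly through the clique $\{p,2\}\cup M$, where the paper instead splits into its two cases according to the sizes $|\pi(u^\alpha\pm1)|$.
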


\begin{proof}
By Lemma \ref{chpsl}, one of the following cases occurs:\\
Case 1:  Either $|\pi(u^\alpha\pm 1)|=n-2$ and $u$ is odd, or for some $\epsilon \in \{\pm 1\}$, $|\pi(u^\alpha +\epsilon)|=n-3$ and $|\pi(u^\alpha -\epsilon)|=n-2$:
Since $|\rho(G)|=2n-1$, $|\pi_F|=3$. Thus by Lemma \ref{free} (c), $\Delta(G)[\pi_F]$ is a non-empty graph. Hence as $|\pi(S)|=2n-4$, using Lemma \ref{easy}, we are done.\\
Case 2: for some $\epsilon\in \{\pm 1\}$, $|\pi(u^\alpha-\epsilon)|=n-1$ and either $|\pi(u^\alpha+\epsilon)|\leqslant n-2$ and $u=2$, or $|\pi(u^\alpha+\epsilon)|=n-2\;or\; n-1$ and $u$ is odd:
Since $|\rho(G)|=2n-1$, $\pi_F\neq \emptyset$. Thus using Lemma \ref{free} (d), the induced subgraph of $\Delta(G)$ on $\pi_F\cup \pi(u^\alpha-\epsilon)$ contains a copy of $K_n$ and it is a contradiction.
\end{proof}

\section*{Acknowledgements}
This research was supported in part
by a grant  from School of Mathematics, Institute for Research in Fundamental Sciences (IPM).


\end{document}